\nonstopmode \numberwithin{equation}{section}
\newtheorem*{theoA}{Theorem A}
\newtheorem*{theoB}{Theorem B}
\newtheorem*{theoE}{Theorem E}
\theoremstyle{plain}
\newtheorem{prop}{Proposition}
\newtheorem{conj}{Conjecture}
\theoremstyle{definition}
\newtheorem{cor}{Corollary}[section]
\newtheorem{thm}{Theorem}[section]
\newtheorem{lem}{Lemma}[section]
\newtheorem{prob}{Problem}
\newtheorem{rem}{Remark}[section]
\theoremstyle{plain}
\newtheorem*{thmC}{Theorem C}
\newtheorem*{thmD}{Theorem D}
\newtheorem*{thmF}{Theorem F}
\newtheorem*{lemA}{Lemma A}
\newtheorem*{lemB}{Lemma B}
\newtheorem*{lemC}{Lemma C}
\newcounter{minutes}\setcounter{minutes}{\time}
\newcounter{hours}\setcounter{hours}{\time}
\newcounter {own}
\def\theown {\thesection       .\arabic{own}}
\newenvironment{pf}[1][]{%
	\vskip 3mm
	\noindent
	\ifthenelse{\equal{#1}{}}%
	{{\slshape Proof. }}%
	{{\slshape #1.} }%
}%
{\qed\bigskip}
\newcounter{alphabet}
\def\be{\begin{equation}}
	\def\ee{\end{equation}}
\newcommand{\bee}{\begin{enumerate}}
	\newcommand{\eee}{\end{enumerate}}
\newcommand{\blem}{\begin{lem}}
	\newcommand{\elem}{\end{lem}}
\newcommand{\bthm}{\begin{thm}}
	\newcommand{\ethm}{\end{thm}}
\newcommand{\bcor}{\begin{cor}}
	\newcommand{\ecor}{\end{cor}}
\newcommand{\beg}{\begin{examp}}
	\newcommand{\eeg}{\end{examp}}
\newcommand{\begs}{\begin{examples}}
	\newcommand{\eegs}{\end{examples}}
\newcommand{\bdefn}{\begin{defn}}
	\newcommand{\edefn}{\end{defn}}
\newcommand{\bprob}{\begin{prob}}
	\newcommand{\eprob}{\end{prob}}
\newcommand{\bei}{\begin{itemize}}
	\newcommand{\eei}{\end{itemize}}
\newcommand{\bcon}{\begin{conj}}
	\newcommand{\econ}{\end{conj}}
\newcommand{\bcons}{\begin{conjs}}
	\newcommand{\econs}{\end{conjs}}
\newcommand{\bprop}{\begin{prop}}
	\newcommand{\eprop}{\end{prop}}
\newcommand{\br}{\begin{rem}}
	\newcommand{\er}{\end{rem}}
\newcommand{\brs}{\begin{rems}}
	\newcommand{\ers}{\end{rems}}
\newcommand{\bo}{\begin{obser}}
	\newcommand{\eo}{\end{obser}}
\newcommand{\bos}{\begin{obsers}}
	\newcommand{\eos}{\end{obsers}}
\newcommand{\bpf}{\begin{pf}}
	\newcommand{\epf}{\end{pf}}
\newcommand{\ba}{\begin{array}}
	\newcommand{\ea}{\end{array}}
\newcommand{\beq}{\begin{eqnarray}}
	\newcommand{\beqq}{\begin{eqnarray*}}
		\newcommand{\eeq}{\end{eqnarray}}
	\newcommand{\eeqq}{\end{eqnarray*}}
\begin{document}
\title{The Bohr inequality for vector-valued holomorphic functions with lacunary series in complex Banach spaces}

\author{Sabir Ahammed}
\address{Sabir Ahammed, Department of Mathematics, Jadavpur University, Kolkata-700032, West Bengal, India.}
\email{sabira.math.rs@jadavpuruniversity.in}

\author{Molla Basir Ahamed}
\address{Molla Basir Ahamed, Department of Mathematics, Jadavpur University, Kolkata-700032, West Bengal, India.}
\email{mbahamed.math@jadavpuruniversity.in,rageshh.math.rs@jadavpuruniversity.in}

\author{Rajesh Hossain}
\address{Rajesh Hossain, Department of Mathematics, Jadavpur University, Kolkata-700032, West Bengal, India.}
\email{rajesh1998hossain@gmail.com}

\subjclass[2020]{Primary: 32A05, 32A10, 32A22; Secondary: 46B20,46G25, 46E50.}
\keywords{Bohr inequality, Bohr-Rogosinski inequality, holomorphic mapping, homogeneous polynomial expansion, unit polydisc.}

\def\thefootnote{}
\footnotetext{ {\tiny File:~\jobname.tex,
printed: \number\year-\number\month-\number\day,
          \thehours.\ifnum\theminutes<10{0}\fi\theminutes }
} \makeatletter\def\thefootnote{\@arabic\c@footnote}\makeatother

\begin{abstract} 
In this paper, we study the Bohr inequality with lacunary series for vector-valued holomorphic functions defined in unit ball of finite dimensional Banach sequence space. Also, we study the Bohr-Rogosinski inequality for same class of functions. All the results are proved to be sharp.
\end{abstract}

\maketitle
\pagestyle{myheadings}
\markboth{S. Ahammed,  M. B. Ahamed and R. Hossain }{The Bohr inequality for vector-valued holomorphic functions }
\tableofcontents
\section{Introduction}
\label{intro}
\subsection{The classical Bohr inequality and its recent implications }
Let $H^\infty$ denote the class of all bounded analytic functions $f$ in the unit disk $\mathbb{D}:=\{z \in \mathbb{C} : |z| < 1\}$ equipped with the topology of uniform convergence on compact subsets of $\mathbb{D}$ with the supremum norm $\| f \|_\infty := \sup_{z \in \mathbb{D}} | f (z)|$ and $\mathcal{B}:= \{ f \in H^\infty : \| f \|_\infty \leq 1\}$. Let us start with a remarkable result of Harald Bohr published in $ 1914 $, dealing with a problem connected with Dirichlet series and number theory, which stimulated a lot of research activity into geometric function theory in recent years.
\begin{theoA}\cite{Bohr-1914}
	If $ f(z)=\sum_{s=0}^{\infty}a_sz^s\in\mathcal{B} $, then 
	\begin{equation}\label{e-1.2}
		\sum_{s=0}^{\infty}|a_s|r^s\leq 1 \;\; \mbox{for}\;\; |z|=r\leq {1}/{3}.
	\end{equation}
\end{theoA}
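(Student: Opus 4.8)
The plan is to reduce inequality \eqref{e-1.2} to an elementary one-variable estimate by first establishing a sharp bound on the Taylor coefficients of functions in $\mathcal{B}$. Concretely, I would first prove the coefficient inequality: if $f(z)=\sum_{s=0}^{\infty}a_sz^s\in\mathcal{B}$, then $|a_0|\le 1$ and $|a_s|\le 1-|a_0|^2$ for every $s\ge 1$. For the second bound, fix $s\ge 1$, put $\omega=e^{2\pi i/s}$, and average $f$ over the $s$-th roots of unity by setting $g(z)=\frac1s\sum_{j=0}^{s-1}f(\omega^jz)$. Then $\|g\|_\infty\le 1$, and since only the powers $z^{ms}$ survive the averaging we may write $g(z)=\varphi(z^s)$, where $\varphi(w)=\sum_{m\ge 0}a_{ms}w^m$ is holomorphic on $\mathbb{D}$ with $\varphi(0)=a_0$, $\varphi'(0)=a_s$, and $\|\varphi\|_\infty\le 1$ (for $|w|<1$ choose $z$ with $z^s=w$ and note $\varphi(w)=g(z)$). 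The Schwarz--Pick estimate applied to $\varphi\in\mathcal{B}$ then yields $|a_s|=|\varphi'(0)|\le 1-|\varphi(0)|^2=1-|a_0|^2$.

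Next I would substitute this into the Bohr sum. Writing $a=|a_0|\in[0,1]$ and using $|a_s|\le 1-a^2$ for $s\ge 1$,
\[
\sum_{s=0}^{\infty}|a_s|r^s\le a+(1-a^2)\sum_{s=1}^{\infty}r^s=a+(1-a^2)\,\frac{r}{1-r}\qquad(0\le r<1),
\]
so it suffices to show $a+(1-a^2)\frac{r}{1-r}\le 1$. Subtracting $a$ and cancelling the factor $1-a\ge 0$, this is equivalent to $(1+a)\frac{r}{1-r}\le 1$, i.e.\ to $r\le \frac{1}{2+a}$. Since $0\le a\le 1$ we have $\frac{1}{2+a}\ge\frac13$, and hence the inequality holds for all $r\le 1/3$, which proves \eqref{e-1.2}.

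For sharpness I would test the Möbius automorphisms $f_a(z)=\frac{a-z}{1-az}$ with $a\in(0,1)$, whose expansion is $f_a(z)=a-(1-a^2)\sum_{s\ge 1}a^{s-1}z^s$, so that $\sum_{s\ge 0}|a_s|r^s=a+(1-a^2)\frac{r}{1-ar}$. A short computation shows this quantity exceeds $1$ exactly when $r>\frac{1}{1+2a}$; letting $a\to 1^-$ shows that no radius larger than $1/3$ can work, so $1/3$ is best possible.

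The only real content is the coefficient estimate $|a_s|\le 1-|a_0|^2$; the remaining steps are routine algebra. The one point needing a little care is the identification $g(z)=\varphi(z^s)$ with $\varphi$ holomorphic and bounded by $1$ on $\mathbb{D}$ — this is immediate from the invariance of $g$ under $z\mapsto\omega z$ together with the fact that $g$ is an average of members of $\mathcal{B}$ — after which the Schwarz--Pick inequality finishes the argument.
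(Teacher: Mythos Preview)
Your proof is correct. However, the paper does not supply its own proof of Theorem~A: the result is quoted as a classical theorem, attributed to Bohr with the sharp constant $1/3$ obtained independently by Riesz, Schur and Wiener, and the text merely points to the references \cite{Sidon-1927,Paulsen-Popescu-Singh-PLMS-2002,Tomic-1962} and to an argument via Rogosinski's subordination coefficient inequality in \cite{Ponnusamy-Wirths-CMFT-2020} without reproducing any proof. There is thus nothing in the paper to compare your argument against. Your route---establishing Wiener's coefficient bound $|a_s|\le 1-|a_0|^2$ by the roots-of-unity averaging trick and Schwarz--Pick, then summing the geometric series and checking sharpness on the M\"obius automorphisms $f_a$---is exactly the classical Wiener proof, and it is complete as written.
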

The inequality fails when $ r>{1}/{3} $ in the sense that there are functions in $ \mathcal{B} $ for which the inequality is reversed when $ r>{1}/{3} $. H. Bohr initially showed that the inequality \eqref{e-1.2} holds only for $|z|\leq1/6$, which was later improved independently by M. Riesz, I. Schur, F. Wiener and some others. The sharp constant $1/3$  and the inequality \eqref{e-1.2} in Theorem A are called respectively, the Bohr radius and the  classical Bohr inequality for the family $\mathcal{B}$.  
A direct proof of it with the help of Rogosinski's coefficient inequality for function subordinate to
a univalent function has been indicated in \cite{Ponnusamy-Wirths-CMFT-2020} 
which motivates to extend many results.
Several other proofs of this interesting inequality were given in different articles
(see \cite{Sidon-1927,Paulsen-Popescu-Singh-PLMS-2002,Tomic-1962}).\vspace{1.2mm}

\subsection{Basic Notations}
For $m \in \mathbb{N} := \{1, 2, \dots\}$, let
\begin{align*}
	\mathcal{B}_m = \{\omega \in B : \omega(0) = \dots = \omega^{(m-1)}(0) = 0 \text{ and } \omega^{(m)}(0) \neq 0\}
\end{align*}
so that $\mathcal{B}_1 = \{\omega \in B : \omega(0) = 0 \text{ and } \omega'(0) \neq 0\}$. Also, for $f (z) = \sum_{s=0}^\infty a_s z^s \in \mathcal{B}$
and $f_0(z) := f (z) - f (0)$, we let (as in \cite{Ponnusamy-Vijayakumar-Wirths-HJM-2021})

\begin{align*}
	B_N(f,r) := \sum_{s=N}^\infty |a_s|r^s\;\;\mbox{for}\;\;N \geq 0\;\; \mbox{and}\;\; \| f_0\|_2^2 r := \sum_{s=1}^\infty |a_s|^2 r^{2s},
\end{align*}
and in what follows we introduce
\begin{align*}
	A( f_0, r ) := \left( \frac{1}{1 + |a_0|} + \frac{r}{1 - r} \right) \| f_0\|_2^2 r,
\end{align*}
which helps to reformulate refined classical Bohr inequalities.
\subsection{Recent Bohr-type inequalities}
In recent years, the study of Bohr phenomena have been an active research topic. Many researchers continuously investigated the Bohr-type inequalities and also examining their sharpness for certain classes of analytic functions. In this follow, Kayumov and Ponnusamy established the following result.
\begin{theoB}\cite[Theorem 3]{Kayumov-Ponnusamy-CRACAD-2018}
	Suppose that $f(z)=\sum_{s=0}^{\infty}a_sz^s\in\mathcal{B} $. Then
	\begin{equation*}
			B_0(f,r) +|f_0(z)|^2\leq 1 \;\; \mbox{for}\;\; |z|=r\leq {1}/{3}.
	\end{equation*}
	The number $1/3$ cannot be improved.
\end{theoB}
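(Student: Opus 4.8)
The plan is to sidestep the naive coefficient bounds (which, one checks, are too weak already around $|a_0|\approx 0.72$) and to funnel everything through the classical Bohr inequality (Theorem A) applied to an auxiliary Schwarz function, followed by one elementary polynomial inequality. I would first make harmless normalizations: since $f\mapsto e^{i\alpha}f$ changes neither $B_N(f,r)$ nor $|f_0(z)|$, assume $a_0=|a_0|\in[0,1)$ (the case $|a_0|=1$ is trivial, since then $f\equiv a_0$ and the left side equals $1$). Next set $g(z):=(f(z)-a_0)/(1-a_0 f(z))\in\mathcal{B}$, so that $g(0)=0$ and hence, by the Schwarz lemma, $\psi:=g/z\in\mathcal{B}$; inverting gives $f=(a_0+g)/(1+a_0 g)$ and therefore $f_0(z)=(1-a_0^2)g(z)/(1+a_0 g(z))$.

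The heart of the argument is a single auxiliary quantity $u:=\sum_{n\ge 1}|c_n|r^n$, where $g(z)=\sum_{n\ge 1}c_n z^n$. First, because $c_n$ is the $(n-1)$st Taylor coefficient of $\psi$ and $r\le 1/3$, Theorem A yields $u=r\sum_{m\ge 0}|\psi_m|r^m=r\,B_0(\psi,r)\le r$; this is the crucial reduction. Second, writing $g/(1+a_0g)=\sum_{k\ge 1}(-a_0)^{k-1}g^k$ and using that the Bohr "norm" $\|h\|_r:=\sum_n|h_n|r^n$ is submultiplicative (i.e.\ $\|h_1h_2\|_r\le\|h_1\|_r\|h_2\|_r$), I obtain $B_1(f,r)=(1-a_0^2)\|g/(1+a_0g)\|_r\le(1-a_0^2)u/(1-a_0u)$; and since $|g(z)|\le u$, similarly $|f_0(z)|\le(1-a_0^2)|g(z)|/(1-a_0|g(z)|)\le(1-a_0^2)u/(1-a_0u)=:v$. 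Using monotonicity in $u$ and $u\le r\le 1/3$ gives $v\le(1-a_0^2)/(3-a_0)$, whence $B_0(f,r)+|f_0(z)|^2=a_0+B_1(f,r)+|f_0(z)|^2\le a_0+v+v^2$.

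It then remains to verify $a_0+v+v^2\le 1$ for $v=(1-a_0^2)/(3-a_0)$ and $a_0\in[0,1)$. Clearing denominators and cancelling the factor $1-a_0>0$, this is equivalent to $(1+a_0)(4-a_0-a_0^2)\le(3-a_0)^2$, i.e.\ to $a_0^3+3a_0^2-9a_0+5\ge 0$, and this holds because $(3-a_0)^2-(1+a_0)(4-a_0-a_0^2)=(1-a_0)^2(a_0+5)\ge 0$. For sharpness of $1/3$ I would take $f_a(z)=(a-z)/(1-az)$, $0<a<1$, evaluated at $z=r$: one computes $B_0(f_a,r)+|f_{a,0}(r)|^2=a+\tfrac{(1-a^2)r}{1-ar}+\big(\tfrac{(1-a^2)r}{1-ar}\big)^2$, which as $a\to 1^-$ equals $1-\tfrac{1-3r}{1-r}(1-a)+O((1-a)^2)$ and hence exceeds $1$ for every fixed $r>1/3$.

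The main obstacle — and essentially the only non-bookkeeping step — is recognizing that the problem collapses onto Theorem A through the estimate $u=\|g\|_r\le r$ for the Schwarz quotient $\psi=g/z$; the separate crude bounds $B_1(f,r)\le(1-a_0^2)r/(1-r)$ and $|f_0(z)|\le(1-a_0^2)r/(1-r)$ are genuinely insufficient, so this reduction is essential rather than cosmetic. The submultiplicative-norm estimate of $B_1(f,r)$ and the cubic factorization $(1-a_0)^2(a_0+5)\ge 0$ — which is precisely what pins the constant at $1/3$ — are then routine.
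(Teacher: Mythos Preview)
Your argument is correct. The paper itself does not give a proof of Theorem B --- it is quoted from \cite{Kayumov-Ponnusamy-CRACAD-2018} as background --- so there is nothing in the present manuscript to compare your route against.

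On the substance: the M\"obius reduction $g=(f-a_0)/(1-a_0 f)$ together with the key observation that $u=\|g\|_r=r\,B_0(g/z,r)\le r$ for $r\le 1/3$ (via Theorem A applied to the Schwarz quotient $g/z$) is a clean way around the obstacle you correctly diagnose, namely that the naive Wiener-type bound $B_1(f,r)\le (1-a_0^2)r/(1-r)$ is too weak. The submultiplicativity of $\|\cdot\|_r$, the monotone estimate $v\le (1-a_0^2)/(3-a_0)$, and the factorization $(3-a_0)^2-(1+a_0)(4-a_0-a_0^2)=(1-a_0)^2(a_0+5)\ge 0$ are all verified without issue. The sharpness via $f_a(z)=(a-z)/(1-az)$ and the first-order expansion in $1-a$ is standard and accurate. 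One minor remark: your parenthetical that the naive bounds fail ``around $|a_0|\approx 0.72$'' is imprecise (they already fail for $|a_0|$ a bit above $0.4$), but this is commentary, not part of the proof.

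Compared with the original Kayumov--Ponnusamy argument, which proceeds through a refined coefficient inequality of the type recorded in this paper as Lemma~C, your route is more elementary in that it bootstraps directly off the classical Bohr inequality rather than establishing a separate quadratic refinement; the price is the extra algebraic step at the end, which you handle cleanly.
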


Let $f$ be holomorphic in $\mathbb D$, and for $0<r<1$,  let $\mathbb D_r=\{z\in \mathbb C: |z|<r\}$.
Let $S_r:=S_r(f)$ denote the planar integral
\begin{equation*}
\label{Sr-Def}
	S_r=\int_{\mathbb D_r} |f'(z)|^2 d A(z).
\end{equation*}
If the function $f\in \mathcal{B}$ has Taylor's series expansion $f(z)=\sum_{s=0}^{\infty}a_sz^s $, then we obtain  (see \cite{Kayumov-Ponnusamy-CRACAD-2018})
\begin{align*}
	S_r= \pi\sum_{s=1}^\infty s|a_s|^2 r^{2s}.
\end{align*}
  
In the study of the improved Bohr inequality, the quantity $ S_r $ plays a significant role. There are many results on the improved Bohr inequality for the class $ \mathcal{B} $ (see \cite{Ismagilov- Kayumov- Ponnusamy-2020-JMAA,Kayumov-Ponnusamy-CRACAD-2018}), and for harmonic mappings on unit disk (see \cite{Evdoridis-Ponnusamy-Rasila-Indag.Math.-2019}). Liu \emph{et al.} further studied the improved Bohr inequality with proper combinations of $S_r/\pi$ and obtain the following result.
\begin{thmC}\cite[Theorem 4]{Liu-Liu-Ponnusamy-BSM-2021}
	Suppose that $f(z)=\sum_{s=0}^{\infty}a_sz^s\in\mathcal{B} $. Then
	\begin{align*}
		B_0(f,r)+A(f_0,r)+\dfrac{8}{9}\left(\dfrac{S_r}{\pi}\right)\leq 1 \;\;\mbox{for}\;\; |z|=r\leq \dfrac{1}{3}.
	\end{align*}
	The constant $1/3$ cannot be improved.
\end{thmC}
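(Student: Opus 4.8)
The plan is to bound each of the three terms on the left-hand side by an explicit function of $a:=|a_0|$ and then to reduce the whole statement to a single rational inequality in $a\in[0,1)$ and $r\in(0,1/3]$. The case $a=1$ is trivial: then $f$ is a unimodular constant, $B_0(f,r)=1$, $A(f_0,r)=0$, $S_r=0$.

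First I would record the coefficient estimates. By the Schwarz--Pick lemma one may write $\frac{f-a_0}{1-\overline{a_0}f}=z\,\omega(z)$ with $\omega\in\mathcal B$, so that
\[
	f(z)-a_0=\frac{(1-a^2)\,z\omega(z)}{1+\overline{a_0}\,z\omega(z)}=(1-a^2)\sum_{m=1}^{\infty}(-\overline{a_0})^{m-1}\bigl(z\omega(z)\bigr)^m .
\]
Since $\bigl(z\omega(z)\bigr)^m=z^m\omega(z)^m$ with $\omega^m\in\mathcal B$, comparing coefficients and applying Theorem A to each $\omega^m$ yields the sharp bound $\sum_{s=1}^{\infty}|a_s|r^s\le(1-a^2)\sum_{m=1}^{\infty}a^{m-1}r^m=\frac{(1-a^2)r}{1-ar}$ for $0<r\le1/3$, while the standard estimate $|a_s|\le1-a^2$ $(s\ge1)$ gives $\sum_{s\ge1}|a_s|^2r^{2s}\le(1-a^2)^2\frac{r^2}{1-r^2}$ and $\sum_{s\ge1}s|a_s|^2r^{2s}\le(1-a^2)^2\frac{r^2}{(1-r^2)^2}$. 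Feeding these into $B_0(f,r)+A(f_0,r)+\tfrac89\tfrac{S_r}{\pi}$ and using $\tfrac1{1+a}(1-a^2)^2=(1-a)^2(1+a)$, the claim reduces to
\[
	\Phi(a,r):=a+\frac{(1-a^2)r}{1-ar}+\Bigl(\frac1{1+a}+\frac r{1-r}\Bigr)(1-a^2)^2\frac{r^2}{1-r^2}+\frac89(1-a^2)^2\frac{r^2}{(1-r^2)^2}\le1 .
\]

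Next I would analyze $\Phi$. A short manipulation shows $1-\Phi(a,r)=(1-a)\,G(a,r)$ with
\[
	G(a,r)=\frac{1-r-2ar}{1-ar}-(1-a^2)\frac{r^2}{1-r^2}-(1-a)(1+a)^2\frac{r^3}{(1-r)(1-r^2)}-\frac89(1-a)(1+a)^2\frac{r^2}{(1-r^2)^2}.
\]
For each fixed $a$ the map $r\mapsto G(a,r)$ is decreasing on $(0,1/3]$: the first term has $r$-derivative $-(1+a)/(1-ar)^2<0$, and each of the three subtracted terms is increasing in $r$; hence it suffices to check $r=1/3$, where $G(a,1/3)=(1-a)\bigl[\tfrac2{3-a}-\tfrac{1+a}8-\tfrac{3(1+a)^2}{16}\bigr]=\dfrac{(1-a)^2(17-2a-3a^2)}{16(3-a)}\ge0$ on $[0,1]$. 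For sharpness I would take $f(z)=\frac{a-z}{1-az}$, for which $B_0(f,r)=a+\frac{(1-a^2)r}{1-ar}$, $\sum_{s\ge1}|a_s|^2r^{2s}=\frac{(1-a^2)^2r^2}{1-a^2r^2}$ and $\frac{S_r}{\pi}=\frac{(1-a^2)^2r^2}{(1-a^2r^2)^2}$, so the left-hand side tends to $1$ as $a\to1^-$; and for $r>1/3$ it equals $1+(1-a)\bigl(\tfrac{2r}{1-r}-1\bigr)+O((1-a)^2)>1$ once $a$ is close enough to $1$, so $1/3$ is best possible.

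The hard part will be the coefficient estimate $\sum_{s\ge1}|a_s|r^s\le\frac{(1-a^2)r}{1-ar}$ on $(0,1/3]$: the naive bound $\sum_{s\ge1}|a_s|r^s\le(1-a^2)\frac r{1-r}$ is genuinely too weak---with it the reduced inequality $\Phi(a,r)\le1$ becomes false for $a$ near $1$---so the expansion of $f-a_0$ in powers of $z\omega$ followed by Theorem A applied to each $\omega^m$ is doing the essential work. The second delicate point is that the one-variable inequality of Step 2 is saturated in the limit $a\to1^-$, so none of the three estimates above can be loosened.
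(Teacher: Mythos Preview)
Your proof is correct, but it takes a different route from the one implicit in the paper. The paper does not prove Theorem~C directly (it is quoted from \cite{Liu-Liu-Ponnusamy-BSM-2021}); its own argument is the specialisation of the proof of Theorem~\ref{Thm-2.1} to $n=1$, $p=1$, $N=1$, $d_1=8/9$. There the key step is to invoke Lemma~C as a \emph{single} combined bound
\[
B_1(f,r)+A(f_0,r)\le \frac{(1-a^2)r}{1-r},
\]
together with $S_r/\pi\le (1-a^2)^2 r^2/(1-r^2)^2$, reducing everything to
\[
Q(a,r)=\frac{(1-a^2)r}{1-r}+\tfrac{8}{9}\,\frac{(1-a^2)^2 r^2}{(1-r^2)^2}-(1-a)\le 0,
\]
which at $r=1/3$ factors as $-\tfrac{1}{8}(1-a)^3(a+3)\le 0$.

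You instead bound $B_1$ and $A$ \emph{separately}: you prove the sharper estimate $B_1(f,r)\le (1-a^2)r/(1-ar)$ by expanding $f-a_0$ in powers of $z\omega$ and applying Bohr's Theorem~A to each $\omega^m$, while estimating $A(f_0,r)$ crudely via $|a_s|\le 1-a^2$. This leads to a more complicated reduced function $G(a,r)$, but your monotonicity argument and the factorisation $G(a,1/3)=(1-a)^2(17-2a-3a^2)/[16(3-a)]$ are both right. Your closing remark that the ``naive'' bound $B_1\le (1-a^2)r/(1-r)$ would fail is accurate for your separate-term scheme, but notice that the paper's route shows this same quantity is perfectly adequate once $A(f_0,r)$ is absorbed into it via Lemma~C. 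So what your Schwarz--Pick expansion buys you is independence from Lemma~C, at the price of a longer final computation; what the paper's approach buys is a one-line reduction but with Lemma~C as an external input. Both sharpness arguments (extremal Blaschke factor, $a\to 1^-$) are the same.
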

Jia \emph{et al.} \cite{Jia-Liu-Ponnusamy-AMP-2025} further extended the classical Bohr inequality and obtain the following sharp result.
\begin{thmD}\cite[Theorem 1]{Jia-Liu-Ponnusamy-AMP-2025}
	Suppose that $f\in \mathcal{B}$ has the expansion $f(z)=\sum_{s=0}^\infty a_s z^s$ with $|f(0)|<1$, $\omega_s \in \mathcal{B}_s$ for $s \geq 1$ and $ R_{p}^{k,m}$ is the unique root in $(0, 1)$ of the equation 
	\begin{align*}
	 \frac{r^k}{1 - r^k} + \frac{r^m}{1 - r^m} - \frac{p}{2}=0,
	\end{align*}
	for some $k,m \in \mathbb{N}$ and $p\in(0, 2]$. Then we have
	\begin{align*}
		|f(0)|^p + B_1(f , |\omega_k (z)|) + A(f_0, |\omega_k (z)|) + |f (\omega_m(z)) - f(0)| \leq 1
	\end{align*}
	for $|z| = r \leq  R_{p}^{k,m}.$ The number $ R_{p}^{k,m}$ cannot be improved.
\end{thmD}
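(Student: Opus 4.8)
The plan is to estimate the left‑hand side summand by summand, pushing every Bohr‑type contribution onto the single quantity $\Phi(r):=\frac{r^k}{1-r^k}+\frac{r^m}{1-r^m}$, and then to finish with an elementary one‑variable inequality that explains why the number $p/2$ appears in the defining equation of $R_{p}^{k,m}$. Throughout write $a:=|f(0)|=|a_0|\in[0,1)$ (the case $a=1$ being trivial) and $r=|z|$.

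\emph{Reducing the lacunary inputs.} Since $\omega_k\in\mathcal B_k$ and $\omega_m\in\mathcal B_m$, both belong to $\mathcal B$ and vanish at the origin to order at least $k$, resp. $m$; hence the Schwarz lemma gives $|\omega_k(z)|\le r^k$ and $|\omega_m(z)|\le r^m$. The workhorse is the refined Bohr inequality (see \cite{Ponnusamy-Vijayakumar-Wirths-HJM-2021,Liu-Liu-Ponnusamy-BSM-2021}): for every $g\in\mathcal B$, writing $g_0:=g-g(0)$, and every $0\le t<1$,
\[
B_1(g,t)+A(g_0,t)\le\bigl(1-|g(0)|^2\bigr)\frac{t}{1-t}.
\]
Applying this with $g=f$ and $t=|\omega_k(z)|$, and using that $t\mapsto t/(1-t)$ is increasing together with $|\omega_k(z)|\le r^k$, I would get $B_1(f,|\omega_k(z)|)+A(f_0,|\omega_k(z)|)\le(1-a^2)\frac{r^k}{1-r^k}$. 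For the last summand, the triangle inequality gives $|f(\omega_m(z))-f(0)|\le\sum_{s\ge 1}|a_s|\,|\omega_m(z)|^s=B_1(f,|\omega_m(z)|)$, and since $A(f_0,\cdot)\ge 0$ the same refined inequality (with $t=|\omega_m(z)|$) yields $|f(\omega_m(z))-f(0)|\le(1-a^2)\frac{r^m}{1-r^m}$.

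\emph{The scalar reduction.} Adding the last two estimates to $|f(0)|^p=a^p$,
\[
|f(0)|^p+B_1(f,|\omega_k(z)|)+A(f_0,|\omega_k(z)|)+|f(\omega_m(z))-f(0)|\le a^p+(1-a^2)\,\Phi(r).
\]
Because $\Phi$ is continuous and strictly increasing from $0$ to $\infty$ on $(0,1)$, the equation $\Phi(r)=p/2$ has a unique root $R_{p}^{k,m}$, and $\Phi(r)\le p/2$ for $r\le R_{p}^{k,m}$; so it remains to prove $a^p+\frac p2(1-a^2)\le 1$ for $a\in[0,1]$ and $p\in(0,2]$, equivalently $\frac p2\le\frac{1-a^p}{1-a^2}$ for $a\in[0,1)$. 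Set $h(a)=2-p+pa^2-2a^p$; then $h(1)=0$ and $h'(a)=2pa\bigl(1-a^{p-2}\bigr)\le 0$ on $(0,1)$ because $p\le 2$ forces $a^{p-2}\ge 1$ there, so $h$ is nonincreasing and $h(a)\ge h(1)=0$, which is exactly the required inequality.

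\emph{Sharpness.} I would take $\omega_k(z)=z^k$, $\omega_m(z)=z^m$ and $f=f_a$ with $f_a(z)=\frac{a-z}{1-az}$, $a\in(0,1)$, so that $|f_a(0)|=a$ and $|a_s|=(1-a^2)a^{s-1}$ for $s\ge 1$. A direct computation shows that for these choices the refined Bohr inequality above becomes an equality, so at $z=r\in(0,1)$ the left‑hand side equals $a^p+(1-a^2)\frac{r^k}{1-r^k}+(1-a^2)\frac{r^m}{1-ar^m}$; expanding in $\varepsilon=1-a$ this is $1+\varepsilon\bigl(2\Phi(r)-p\bigr)+O(\varepsilon^2)$, which exceeds $1$ for all sufficiently small $\varepsilon>0$ as soon as $r>R_{p}^{k,m}$. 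Hence $R_{p}^{k,m}$ cannot be enlarged. The only genuinely non‑routine ingredient is the packaged refined Bohr inequality used above; if a ready citation is not at hand one reproves it from the Schur--Wiener‑type refinements of the coefficient estimates for $\mathcal B$. The conceptual point is simply that every Bohr‑type term carries the factor $1-|a_0|^2$ while the $|f(0)|^p$ term is $|a_0|^p$, and $\inf_{a\in[0,1)}\frac{1-a^p}{1-a^2}=\frac p2$ is precisely the constant in the defining equation of $R_{p}^{k,m}$.
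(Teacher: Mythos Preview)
Your proof is correct and follows essentially the same route as the paper's proof of its generalization, Theorem~\ref{Thm-1.2}: bound $B_1+A$ by $(1-a^2)t/(1-t)$ via the refined Bohr inequality (Lemma~C with $N=1$), bound $|f(\omega_m(z))-f(0)|$ by $(1-a^2)r^m/(1-r^m)$, reduce to the scalar inequality $(1-a^p)/(1-a^2)\ge p/2$, and verify sharpness with the disc automorphism $f_a$ as $a\to 1^-$. The only cosmetic difference is that you prove the scalar step by differentiating $h(a)=2-p+pa^2-2a^p$, whereas the paper simply notes that $x\mapsto(1-x^p)/(1-x^2)$ is decreasing on $[0,1)$ with limit $p/2$.
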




 In order to determine the Bohr radius for the class of odd functions in the family $ \mathcal{B} $, which was posed in \cite{Ali-2017}, Kayumov and Ponnusamy 
 \cite{Kayumov-Ponnusamy-CMFT-2017,Kayumov-Ponnusamy-2018-JMAA}
 studied  the Bohr inequalities for holomorphic functions in a single complex variable.
 Generalizations of this result to holomorphic mappings in several complex variables have been studied (see e.g. 
 \cite{Arora-CVEE-2022,Lin-Liu-Ponnusamy-Acta-2023,Liu-Ponnusamy-PAMS-2021,Hamada-Honda-Kohr-AMP-2025,Hamada-Honda-RM-2024,Aha-Aha-Ham-AMC,Lata-Singh-PAMS-2022}).
The Bohr phenomenon has been extended to
holomorphic or pluriharmonic functions of several variables (see e.g. 
\cite{Boas-Khavinson-PAMS-1997,Liu-Ponnusamy-PAMS-2021,Hamada-Math.Nachr.-2021,Hamada- Honda-BMMS-2024,Aizenberg-PAMS-2000,Hamada-IJM-2009,Lin-Liu-Ponnusamy-Acta-2023,Kumar-PAMS-2022,Kumar-Manna-JMAA-2023,Kumar-Ponnusamy-Williams-NYJM-2025}).

\subsection{The Bohr-Rogosinski inequality and its recent improvements} Similar to the Bohr radius, the notion of the Rogosinski radius was first introduced in \cite{Rogosinski-1923} for functions
$f\in\mathcal{B}$. 
Nevertheless,  as compared to the Bohr radius, the Rogosinski radius has not received the same level of research attention. 
If $ B $ and $ R $
denote the Bohr radius and the Rogosinski radius, respectively, then it is easy to see that $ B = 1/3 < 1/2 = R $.

Moreover, analogous to the Bohr inequality, there is also a concept of the Bohr-Rogosinski inequality. Following the article \cite{Kayumov-Khammatova-Ponnusamy-JMAA-2021},
for the functions $ f(z)=\sum_{s=0}^{\infty}a_sz^s\in\mathcal{B}$, the Bohr-Rogosinski sum $ R^f_N(z) $ of $ f $ is defined by 
\begin{align}\label{e-0.2}
	R^f_N(z):=|f(z)|+\sum_{s=N}^{\infty}|a_s|r^s, \; |z|=r.
\end{align}
An interesting fact to be observed is that for $ N=1 $, the quantity in \eqref{e-0.2} is related to the classical Bohr sum in which $ |f(0)| $ is replaced by $ |f(z)| $. The relation $ R^f_N(z)\leq 1 $ is called the Bohr-Rogosinski inequality. 
For some recent development on the Bohr-Rogosinski inequality, the reader is refereed to the article  
\cite{ Allu-Arora-JMAA-2022,Das-JMAA-2022,Ponnusamy-Vijayakumar-Current Research-2022}, and the references therein. In recent times, the study of Bohr-Rogosinski radius for holomorphic mappings with values in higher dimensional complex Banach spaces is an active research area, and Hamada \emph{et al.} \cite{Hamada-Honda-Kohr-AMP-2025} studied the  Bohr–Rogosinski inequalities for holomorphic mappings with values in higher dimensional complex Banach spaces. Chen \emph{et al.} \cite{Chen-Liu-Ponnusamy-RM-2023} studied the following Bohr-type inequality for bounded analytic self-map on $\mathbb{D}.$
\begin{theoE} \cite[Theorem 6]{Chen-Liu-Ponnusamy-RM-2023}
Suppose that $ f(z)=\sum_{s=0}^{\infty}a_sz^s\in\mathcal{B},$	$p\in (0,2]$, $m,q\geq 2,$ $0<m<q$ and  let $v_m:\mathbb{D}\to \mathbb{D}$  be  {\it Schwarz mappings} having $z=0$ as a zero of order $m.$ For arbitrary $\lambda\in (0,\infty),$ we have 
\begin{align*}
	|f(v_m(z))|^p+\lambda\sum_{s=1}^{\infty}|a_{qs+m}|r^{qs+m}\leq 1\;\;\mbox{for}\;\; |z|=r\leq R_{q,m,\lambda}^p,
\end{align*}
where $R_{q,m,\lambda}^p$ is the minimal root in $(0,1)$ of equation 
\begin{align*}
\Psi(r):=2\lambda\dfrac{r^{q+m}}{1-r^q}-p\dfrac{1-r^m}{1+r^m}=0.
\end{align*}
In the case when $\Psi(r)>0$ in some interval $\left(R_{q,m,\lambda}^p,R_{q,m,\lambda}^p+\epsilon\right),$ the number $R_{q,m,\lambda}^p$ cannot be improved.
\end{theoE}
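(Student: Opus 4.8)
The plan is to majorize the two terms separately by the elementary estimates available for $\mathcal B$, thereby reducing the claim to a single inequality in $a:=|a_0|\in[0,1]$ and $r$, and then to the sign of $\Psi$ on $[0,1)$. Write $f(z)=\sum_{s=0}^{\infty}a_sz^s$. Since $v_m$ is a Schwarz map vanishing to order $m$ at the origin, the maximum principle applied to $v_m(z)/z^m$ gives $|v_m(z)|\le|z|^m=r^m$, and then the Schwarz--Pick lemma $|f(w)|\le(|a_0|+|w|)/(1+|a_0|\,|w|)$ together with the monotonicity of $t\mapsto(a+t)/(1+at)$ on $[0,1]$ yields $|f(v_m(z))|\le u$, where $u:=(a+r^m)/(1+ar^m)$. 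For the lacunary tail I would use the classical coefficient bound $|a_n|\le 1-|a_0|^2$ $(n\ge1)$ valid for every $f\in\mathcal B$, which gives
\[
\sum_{s=1}^{\infty}|a_{qs+m}|r^{qs+m}\le(1-a^2)\sum_{s=1}^{\infty}r^{qs+m}=(1-a^2)\,\frac{r^{q+m}}{1-r^q}.
\]
Hence it is enough to prove $u^p+\lambda(1-a^2)\dfrac{r^{q+m}}{1-r^q}\le 1$ for all $a\in[0,1]$ whenever $r\le R_{q,m,\lambda}^p$.

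The key scalar inequality is $1-u^p\ge\tfrac p2(1-u^2)$ for $u\in[0,1]$ and $p\in(0,2]$, which follows because $h(u):=1-u^p-\tfrac p2(1-u^2)$ satisfies $h(1)=0$ and $h'(u)=pu\bigl(1-u^{\,p-2}\bigr)\le 0$ on $(0,1)$. Using the identity $1-u^2=(1-a^2)(1-r^{2m})/(1+ar^m)^2$, which follows at once by factoring $(1+ar^m)^2-(a+r^m)^2$, one gets
\[
u^p+\lambda(1-a^2)\frac{r^{q+m}}{1-r^q}\le 1+(1-a^2)\!\left[\lambda\,\frac{r^{q+m}}{1-r^q}-\frac p2\cdot\frac{1-r^{2m}}{(1+ar^m)^2}\right].
\]
Because $a\mapsto(1+ar^m)^{-2}$ is decreasing on $[0,1]$, the bracket is maximal at $a=1$, where it equals $\lambda\dfrac{r^{q+m}}{1-r^q}-\dfrac p2\cdot\dfrac{1-r^m}{1+r^m}=\tfrac12\Psi(r)$; since $1-a^2\ge0$ the right-hand side is $\le 1$ as soon as $\Psi(r)\le 0$. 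Finally, $\Psi$ is strictly increasing on $[0,1)$: $r^{q+m}/(1-r^q)$ has derivative $r^{q+m-1}\bigl[(q+m)-mr^q\bigr]/(1-r^q)^2>0$, while $(1-r^m)/(1+r^m)=-1+2/(1+r^m)$ decreases; with $\Psi(0)=-p<0$ and $\Psi(r)\to+\infty$ as $r\to1^-$ this shows that $R_{q,m,\lambda}^p$ is the unique (hence minimal) zero of $\Psi$ and that $\Psi\le 0$ precisely on $(0,R_{q,m,\lambda}^p]$. This proves the inequality.

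For sharpness I would take $v_m(z)=z^m$ and the extremal family $f_a(z)=\dfrac{a+z}{1+az}\in\mathcal B$, $a\in[0,1)$, whose Taylor coefficients satisfy $|a_0|=a$ and $|a_j|=(1-a^2)a^{j-1}$ for $j\ge1$. Fix $r_0\in(R_{q,m,\lambda}^p,1)$ with $\Psi(r_0)>0$ and evaluate at $z=r_0$: the left-hand side becomes
\[
g(a):=\left(\frac{a+r_0^m}{1+ar_0^m}\right)^{\!p}+\lambda(1-a^2)\,a^{m-1}r_0^m\,\frac{(ar_0)^q}{1-(ar_0)^q}.
\]
A direct computation gives $g(1)=1$ and $g'(1)=p\,\dfrac{1-r_0^m}{1+r_0^m}-2\lambda\,\dfrac{r_0^{q+m}}{1-r_0^q}=-\Psi(r_0)<0$, so $g(a)>1$ for $a$ slightly below $1$; hence no radius larger than $R_{q,m,\lambda}^p$ can work on an interval where $\Psi>0$.

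The main obstacle is the reduction to the worst value $a=1$ of $|a_0|$: the parameter $a$ enters both through $u$ (where larger $a$ makes $u^p$ bigger) and through the factor $1-a^2$ (where larger $a$ makes the tail term smaller), so one must argue that these effects combine favorably rather than compete. The factorization $1-u^2=(1-a^2)(1-r^{2m})/(1+ar^m)^2$ is precisely what disentangles them and lets the problem collapse to the single condition $\Psi(r)\le0$; the scalar inequality $1-u^p\ge\tfrac p2(1-u^2)$ (needed since $p$ need not be an integer) and the monotonicity of $\Psi$ are the other technical points, both elementary.
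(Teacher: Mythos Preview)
Your argument is correct. The paper does not prove Theorem~E itself (it is quoted from \cite{Chen-Liu-Ponnusamy-RM-2023}); the closest analogue is the proof of Theorem~\ref{Thm-4.1}, the higher-dimensional extension. That proof follows exactly your first two steps---the Schwarz--Pick bound $|f(v_m(z))|\le u=(a+r^m)/(1+ar^m)$ via Lemma~A and the Wiener estimate $|a_s|\le 1-a^2$ for the lacunary tail---arriving at the same majorant $u^p+(1-a^2)\lambda r^{q+m}/(1-r^q)$. At that point, however, the paper invokes \cite[Lemma~3]{Chen-Liu-Ponnusamy-RM-2023} as a black box to conclude that this majorant is $\le 1$ for $r\le R_{q,m,\lambda}^p$, whereas you supply a direct, self-contained argument: the scalar inequality $1-u^p\ge\tfrac p2(1-u^2)$ combined with the factorization $1-u^2=(1-a^2)(1-r^{2m})/(1+ar^m)^2$ reduces everything to the worst case $a=1$, i.e.\ to $\Psi(r)\le 0$. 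This is more transparent and avoids the external reference. Your additional observation that $\Psi$ is strictly increasing (so the root is unique and the hypothesis ``$\Psi>0$ on some right interval'' is automatically satisfied) is a mild strengthening not made explicit in the paper. The sharpness arguments are essentially identical: both use $f_a(z)=(a+z)/(1+az)$ and $v_m(z)=z^m$ and let $a\to 1^-$; you phrase the limiting step via $g'(1)=-\Psi(r_0)$, the paper via the equivalent limit of $(g(a)-1)/(1-a)$.
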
 

In recent years, refining the Bohr-type inequalities
have been an active research topic. Many researchers continuously investigated refined Bohr-type inequalities and also examining their sharpness for certain classes of analytic functions, for classes of harmonic mappings on the unit disk $ \mathbb{D} $. For detailed information on such studies, the readers are referred to  
\cite{Ahamed-AASFM-2022,Evdoridis-Ponnusamy-Rasila-RM-2021,Liu-Ponnusamy-Wang-RACSAM-2020,Liu-Liu-Ponnusamy-BSM-2021,Ponnusamy-Vijayakumar-Wirths-JMAA-2022} and the references therein. To continue the study on the Bohr-type inequalities, for any $N\in\mathbb{N}$ and $ k=\lfloor{(N-1)/2}\rfloor,$ we define the following functional:
\begin{align*}
	\mathcal{Q}_{f,N}(r):=\sum_{s=N}^{\infty}|a_s|r^s+\mathrm{sgn}(k)\sum_{s=1}^{k}|a_s|^2\dfrac{r^N}{1-r}+\left(\dfrac{1}{1+|a_0|}+\dfrac{r}{1-r}\right)\sum_{s=k+1}^{\infty}|a_s|^2r^{2s}.
\end{align*}
 Liu \emph{et al.} \cite[Theorem 1]{Liu-Liu-Ponnusamy-BSM-2021} obtained the following refined version of the Bohr-Rogosinski  inequality for class of functions $ f\in\mathcal{B}.$
\begin{thmF}\cite[Theorem 1]{Liu-Liu-Ponnusamy-BSM-2021} Suppose that $ f(z)=\sum_{s=0}^{\infty}a_sz^s\in\mathcal{B}.$ Then
	\begin{align}\label{Eq-1.3}
		|f(z)|+\mathcal{Q}_{f,N}(r)\leq 1\;\;\mbox{for}\;\; |z|= r\leq R_N,
	\end{align}
	where $R_N$ is the positive root of the equation $2(1+r)r^N-(1-r)^2=0$ The radius $R_N$ is best possible. Moreover,
	\begin{align}\label{Eq-1.4}
		|f(z)|^2+\mathcal{Q}_{f,N}(r)\leq 1\;\;\mbox{for}\;\; |z|= r\leq R^\prime_N,
	\end{align}
		where $R^\prime_N$ is the positive root of the equation $(1+r)r^N-(1-r)^2=0$ The radius $R^\prime_N$  is best possible.
\end{thmF}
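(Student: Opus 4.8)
The plan is to derive the theorem from the Schwarz--Pick estimate and the classical coefficient bounds for $f\in\mathcal B$, after first replacing $\mathcal Q_{f,N}(r)$ by the more familiar quantity $B_N(f,r)+A(f_0,r)$. Write $a:=|f(0)|$. Since $\mathcal Q_{f,N}(r)$ depends only on the moduli $|a_s|$ and $\sup_{|z|=r}|f(z)|\le\frac{a+r}{1+ar}$ by Schwarz--Pick, it suffices to bound $\frac{a+r}{1+ar}+\mathcal Q_{f,N}(r)$ (resp.\ $\big(\frac{a+r}{1+ar}\big)^2+\mathcal Q_{f,N}(r)$) by $1$. The first reduction I would make is the pointwise inequality $\mathcal Q_{f,N}(r)\le B_N(f,r)+A(f_0,r)$: for $1\le s\le k=\lfloor(N-1)/2\rfloor$ one has $2s\le N-1$, hence $\big(\tfrac1{1+a}+\tfrac r{1-r}\big)r^{2s}\ge\tfrac r{1-r}r^{N-1}=\tfrac{r^N}{1-r}$, so each weight $\tfrac{r^N}{1-r}$ occurring in the $\mathrm{sgn}(k)$--sum is dominated by the weight that the same coefficient $|a_s|^2$ carries inside $A(f_0,r)$. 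Thus it is enough to prove
\[
\tfrac{a+r}{1+ar}+B_N(f,r)+A(f_0,r)\le1\quad(r\le R_N),\qquad \big(\tfrac{a+r}{1+ar}\big)^2+B_N(f,r)+A(f_0,r)\le1\quad(r\le R^\prime_N).
\]

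For the estimates I would use the classical facts that for $f\in\mathcal B$ with $|f(0)|=a$ one has $|a_s|\le1-a^2$ for every $s\ge1$ and $\sum_{s\ge1}|a_s|^2\le1-a^2$. These give $B_N(f,r)\le(1-a^2)\tfrac{r^N}{1-r}$; and, using $|a_s|^2r^{2s}\le(1-a^2)\,r\,|a_s|r^{s}$ for $s\ge1$ together with $\sum_{s\ge1}|a_s|r^s\le(1-a^2)\tfrac r{1-r}$, one gets
\[
A(f_0,r)\le\Big(\tfrac1{1+a}+\tfrac r{1-r}\Big)(1-a^2)^2\tfrac{r^2}{1-r}\le(1-a^2)^2\,\tfrac{r^2}{(1-r)^2},
\]
with the coarser alternative $A(f_0,r)\le(1-a^2)\tfrac{r^2}{1-r}$ obtained from $\sum_{s\ge1}|a_s|^2r^{2s}\le r^2\sum_{s\ge1}|a_s|^2$. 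Substituting these into the two target inequalities, together with $1-\tfrac{a+r}{1+ar}=\tfrac{(1-a)(1-r)}{1+ar}$ and $1-\big(\tfrac{a+r}{1+ar}\big)^2=\tfrac{(1-a^2)(1-r^2)}{(1+ar)^2}$, and dividing through by $1-a^2$, each reduces to an elementary inequality in $(a,r)\in[0,1)\times(0,1)$.

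The decisive feature is that these inequalities are governed by the limit $a\to1^-$. There $A(f_0,r)$ is $O\big((1-a^2)^2\big)$ and negligible, $\tfrac{(1-a)(1-r)}{1+ar}=(1-a^2)\tfrac{1-r}{(1+a)(1+ar)}$ with $\tfrac{1-r}{(1+a)(1+ar)}\to\tfrac{1-r}{2(1+r)}$, and $B_N(f,r)\le(1-a^2)\tfrac{r^N}{1-r}$, so the first inequality collapses to $\tfrac{r^N}{1-r}\le\tfrac{1-r}{2(1+r)}$, i.e.\ $2(1+r)r^N\le(1-r)^2$, i.e.\ $r\le R_N$. For the squared version $\tfrac{(1-a^2)(1-r^2)}{(1+ar)^2}=(1-a^2)\tfrac{(1-r)(1+r)}{(1+ar)^2}$ with $\tfrac{(1-r)(1+r)}{(1+ar)^2}\to\tfrac{1-r}{1+r}$, which is twice as large, so it collapses to $(1+r)r^N\le(1-r)^2$, i.e.\ $r\le R^\prime_N$. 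I expect the main obstacle to be verifying the elementary inequalities uniformly for all $a\in[0,1)$: the crudest uniform bounds $(1+a)(1+ar)\le2(1+r)$ and $1-a^2\le1$ lose a constant factor and would only yield a radius smaller than $R_N$, so the $a$--dependence must be retained, and I would split according to whether $1-a^2\le1-r$ or $1-a^2>1-r$, using the $O\big((1-a^2)^2\big)$ bound for $A(f_0,r)$ in the first range and the $O(1-a^2)$ one in the second. This is bookkeeping, but it is exactly where one uses that $B_N(f,r)$ and the quadratic part of $A(f_0,r)$ cannot both be large, both being controlled by the single budget $\sum_{s\ge1}|a_s|^2\le1-a^2$.

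For sharpness I would take the single extremal family $f_a(z)=\dfrac{a-z}{1-az}$, $a\in(0,1)$, for which $a_0=a$ and $|a_s|=(1-a^2)a^{s-1}$ for $s\ge1$. Evaluating at $z=-r$, where $|f_a(-r)|=\tfrac{a+r}{1+ar}$, and expanding in $1-a$ — the quadratic terms of $\mathcal Q_{f_a,N}$ enter only at order $(1-a)^2$ — gives
\[
|f_a(-r)|+\mathcal Q_{f_a,N}(r)=1+\frac{1-a}{1-r^2}\big(2(1+r)r^N-(1-r)^2\big)+O\big((1-a)^2\big).
\]
Since $2(1+r)r^N-(1-r)^2$ is strictly increasing on $(0,1)$ with a single zero at $R_N$, for any $r>R_N$ the right-hand side exceeds $1$ once $a$ is close enough to $1$; hence $R_N$ is best possible. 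Replacing $|f_a(-r)|$ by $|f_a(-r)|^2$ doubles the coefficient of $1-a$ and produces $1+\tfrac{2(1-a)}{1-r^2}\big((1+r)r^N-(1-r)^2\big)+O\big((1-a)^2\big)$; since $(1+r)r^N-(1-r)^2$ is also strictly increasing with unique zero $R^\prime_N$, the radius $R^\prime_N$ is likewise best possible.
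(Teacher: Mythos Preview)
Your reduction $\mathcal Q_{f,N}(r)\le B_N(f,r)+A(f_0,r)$ is correct, but the resulting ``sufficient'' inequality
\[
\frac{a+r}{1+ar}+B_N(f,r)+A(f_0,r)\le 1\quad\text{for }r\le R_N
\]
is \emph{false} in general, so the whole approach collapses. Take $f(z)=z$, so $a=0$, $a_1=1$, $a_s=0$ for $s\ge 2$. For any $N\ge 2$ one has $B_N(f,r)=0$ and $A(f_0,r)=\bigl(1+\tfrac{r}{1-r}\bigr)r^2=\tfrac{r^2}{1-r}$, hence
\[
|f(z)|+B_N(f,r)+A(f_0,r)=r+\frac{r^2}{1-r}=\frac{r}{1-r},
\]
which exceeds $1$ for every $r>1/2$. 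But $R_N>1/2$ already for $N=4$ (at $r=1/2$ one has $2(1+r)r^4=3/16<1/4=(1-r)^2$), and $R_N\to 1$ as $N\to\infty$. So your strengthened target is violated on the very interval $(1/2,R_N]$ where you need it. The point is that the functional $\mathcal Q_{f,N}$ is carefully designed so that the quadratic weights on $|a_s|^2$ for $s\le k=\lfloor(N-1)/2\rfloor$ are only $r^N/(1-r)$; by replacing them with the larger weights $\bigl(\tfrac1{1+a}+\tfrac r{1-r}\bigr)r^{2s}$ you destroy exactly the cancellation that makes the sharp radius $R_N$ attainable.

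The paper's route (via its Theorem~\ref{Thm-1.4}, specialising $m_1=1$ and $p=1,2$) avoids this by invoking Lemma~C directly: one has the \emph{single} estimate $\mathcal Q_{f,N}(r)\le\dfrac{(1-a^2)r^N}{1-r}$, and then only
\[
\Bigl(\frac{a+r}{1+ar}\Bigr)^p+\frac{(1-a^2)r^N}{1-r}\le 1
\]
remains, which for $p=1$ reduces (after dividing by $1-a$) to $(1+a)(1+ar)r^N\le(1-r)^2$, and this holds for all $a\in[0,1)$ once $2(1+r)r^N\le(1-r)^2$; similarly for $p=2$. Your sharpness argument via $f_a(z)=\dfrac{a-z}{1-az}$ and $a\to1^-$ is fine and matches the paper's. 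The missing ingredient in your plan is precisely Lemma~C; without it, the reduction to $B_N+A$ is a dead end.
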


\subsection{New problems on multi-dimensional Bohr’s inequality} In the recent years, many authors paid attention to multidimensional generalizations of Bohr’s theorem and draw many conclusions. For example, denote an $n$-variables power series by $\sum_{\alpha}a_\alpha z^\alpha$ with the standard multi-index notation; $\alpha$ denotes an $n$-tuple $(\alpha_1,\alpha_2,\dots ,\alpha_n)$ of non-negative integers, $|\alpha|$ denotes the sum $\alpha_1+\alpha_2+\dots+ \alpha_n$ of its components, $\alpha!$ denotes the product of the factorials $\alpha_1!\alpha_2!\dots \alpha_n!$ of its components, $z=(z_1,\dots,z_n)\in \mathbb{C}^n,$ $z^\alpha=z^{\alpha_1}_1z^{\alpha_2}_2\dots z^{\alpha_n}_n.$ The $n$-dimensional Bohr radius $K_n$ is the largest number such that if $\sum_{\alpha}a_\alpha z^\alpha$ converges in the $n$-dimensional unit polydisk $\mathbb{D}^n$ such that 
\begin{align*}
	\bigg|\sum_{\alpha}a_\alpha z^\alpha\bigg|<1
\end{align*} in $\mathbb{D}^n,$ the $n$-dimensional Bohr radius $K_n$ satisfies
\begin{align*}
	\dfrac{1}{3\sqrt{n}}<K_n<2\sqrt{\dfrac{\log n}{n}}.
\end{align*}
This article became a source of inspiration for many subsequent investigations including connecting the asymptotic behaviour of $K_n$ to problems in the geometry of Banach spaces (cf. \cite{Defant-2019}). However determining the exact value of the Bohr radius $K_n$, $n>1$, remains an open problem. In $2006$, Defant and Frerick \cite{Defant-Frerick-IJM-2006} improved the lower bound as $K_n\geq c\sqrt{\log n/(n\log \log n)}$ whereas Defant \emph{et al.} \cite{Defant-Frerick-Ortega-Cerd-Seip-2011} used the hypercontractivity of the polynomial Bohnenblust-Hille inequality and showed that 
\begin{align*}
	K_n=b_n\sqrt{\dfrac{\log n}{n}}\;\;\mbox{with}\;\; \dfrac{1}{\sqrt{2}}+o(1)\leq b_n\leq 2. 
\end{align*}
In $2014,$ Bayart \emph{et al.} \cite{Bayart-Pellegrino-Seoane-Adv-2014} established the asymptotic behaviour of $K_n$ by showing that 
\begin{align*}
	\lim\limits_{n\to \infty}\dfrac{K_n}{\sqrt{\frac{\log n}{n}}}=1.
\end{align*}
Blasco \cite{Blasco-OTAA-2010} have studied the asymptotic behavior of the holomorphic functions with $p$-norm as $r\to 1$ in $\mathbb{D}^n$ and Banach spaces. Aizenberg \cite{Aizenberg-PAMS-2000,Aizenberg-OTAA-2005} mainly generalized Carath\'eodory’s inequality for functions holomorphic in $\mathbb{C}^n.$ In 2021, Liu and Ponnusamy \cite{Liu-Ponnusamy-PAMS-2021} have established several multidimensional analogues of refined Bohr’s inequality for holomorphic functions on complete circular domain in $\mathbb{C}^n.$  Other aspects and promotion of the Bohr inequality in higher dimensions can be obtained from \cite{Hamada-IJM-2009,Boas-Khavinson-PAMS-1997,Galicer-TAMS-2021,Defant-AM-2012-Advanc}. Moreover, research on Dirichlet series in higher dimensions is also very popular recently (see \cite{Defant-2019}).\vspace{2mm}

In 2020, Liu and Liu \cite{Liu-Liu-JMAA-2020} used the Fr\'echet derivative to establish the Bohr inequality of norm-type for holomorphic mappings with lacunary series on the unit polydisk in $\mathbb{C}^n$ under some restricted conditions. The relevant properties of the Fr\'echet derivative can be seen below (cf. \cite{Graham-2003}).
Throughout the paper, we denote the set of non-negative integers
by $\mathbb{N}_0$.\vspace{2mm}

Let $F:B_X\to Y$ be a holomorphic mapping.
For $k\in {\mathbb N}$, 
we say that $z=0$ is a {\it zero of order $k$ of $F$} if
$F(0)=0$, $DF(0)=0$, $\dots$, $D^{k-1}F(0)=0$,
but $D^kF(0)\neq 0$.\vspace{2mm}

A holomorphic mapping $v:B_X\to B_Y$ with $v(0)=0$
is called a {\it Schwarz mapping}.
We note that if $v$ is a Schwarz mapping such that $z=0$ is a zero of order $k$ of $v$,
then the following estimation holds (see e.g. \cite[Lemma 6.1.28]{Graham-2003}):
\begin{align}\label{Schwarz-k}
	\| v(z)\|_Y\leq \| z\|_X^k,
	\quad z\in B_X.
\end{align}

Let $n\in \mathbb{N},$ $t\in [1,\infty),$ and $B_{{\ell}_t^n}$ be the set defined as the collection of complex vectors $z=(z_1,z_2,\dots, z_n)\in \mathbb{C}^n$ satisfying $\sum_{j=1}^{n}|z_j|^t<1.$ This set constitutes the open unit ball in the complex Banach space ${\ell}_t^n$ where the norm $||z||_t$ of $z$ is given by  $\left(\sum_{j=1}^{n}|z_j|^t\right)^{1/t}<1.$ In the special case of $B_{{\ell}_t^n},$  the set represents the unit polydisk in $\mathbb{C}^n$ denoted as $B_{{\ell}_t^n}:=\mathbb{D}^n,$ where $|z_j|<1$ for $1\leq j\leq n.$ The norm of $z\in {\ell}_t^n$ is defined as $||z||:=\max{|z_j|:1\leq j\leq n}.$ Note that the unit disk $\mathbb{D}$ is equivalent to $B_{{\ell}_t^n}.$\vspace{2mm}

It is natural to raise the following problems.
\begin{prob}\label{P-1}
	Can we establish Theorems B, D, E, and F for vector-valued holomorphic functions with lacunary series from $B_{{\ell}_t^n}$ to $\mathbb{D}^n $ involving Schwarz mappings?
\end{prob}
\begin{prob}\label{P-2}
	Can we establish Theorem C for vector-valued holomorphic functions  with lacunary series from $B_{{\ell}_t^n}$ to $\mathbb{D}^n ?$ 
\end{prob}
In this article, we aim to provide affirmative answers to Problems \ref{P-1} and \ref{P-2}. We begin in Section 2 by presenting our theorems and several key remarks. It's worth noting that Theorems \ref{Thm-4.1}, \ref{Thm-1.2} and \ref{Thm-1.4} primarily offer an affirmative answer to Problem 1, and Theorem   \ref{Thm-2.1} address Problem 2. Following this, Section 3 provides the necessary lemmas that underpin the proofs of our theorems. All theorems are then fully proven in Section 4.
\section{Bohr-type inequalities for vector-valued holomorphic mappings with lacunary series in complex Banach spaces}
Let $X$ and $Y$ be  complex Banach spaces with norms $||\cdot||_X$
and $||\cdot||_Y$, respectively.
For simplicity, we omit the subscript for the norm when it is obvious from the context. 
Let $B_X$ and $B_Y$ be the open unit balls in $X$ and $Y$, respectively. 
If $X=\mathbb{C},$ then $B_X=\mathbb{D}$
is the unit disk in $\mathbb{C}$. Let $\mathcal{H}\left(\Omega,Y\right)$ denote the set of all holomorphic mappings from $\Omega$ into $Y$.\vspace{2mm}

For  $F\in \mathcal{H}\left(B_X,Y\right)$ and $z\in B_X$, let $D^kF(z)$
denote the $k$-th Fr\'{e}chet derivative of $F$ at $z$.
It is well-known (cf. \cite{Graham-2003}) that for any holomorphic mapping  $F\in \mathcal{H}\left(B_X,Y\right)$
can be expanded into the series
\begin{equation}
	\label{expansion}
	F(z) =\sum_{s=0}^{\infty}\frac{D^s F(0)(z^s)}{s!}
\end{equation}
for all $z$ in some neighbourhood of $0\in B_X$, where $D^kF(z)$ is the $k$-th Fr\'echet derivative of $F$ at $z$ and for each $k\in \mathbb{N},$ we have 
\begin{align*}
	D^k f(0)(z^k) = D^k f(0)(\underbrace{z, z, \dots, z}_{k}).
\end{align*}
Moreover, if $k=0,$ then $D^0F(0)(z^0)=F(0).$
Note that if $F(B_X)$ is bounded,
then (\ref{expansion}) converges uniformly on $rB_X$ for each $r\in (0,1)$.\vspace{2mm} 

For each $x\in X\setminus\{0\},$ we define
\begin{align*}
	T(x)=\{T_x\in X^*:||T_x||=1, T_x(x)=||x||\}, 
\end{align*}
where
$X^*$ is the dual space of $X$.
Then the well known Hahn-Banach theorem implies that $T(x)$ is non empty.\vspace{2mm}

In this section, we will extend the Bohr-type inequalities to higher dimensional spaces using the Fr\'echet derivative.
\subsection{Extension of Theorems B, D, E and F for functions in the class $\mathcal{H}\left(B_{{\ell}_t^n},\overline{\mathbb{D}}^n \right)$}

We obtain the following result for vector-valued holomorphic functions defined in the unit ball of a finite-dimensional Banach sequence space. The inequality we consider here combines versions of those in Theorems B and E.
\begin{thm}\label{Thm-4.1}
Suppose that $1\leq t\leq \infty,$ and  $f\in \mathcal{H}\left(B_{{\ell}_t^n},\overline{\mathbb{D}}^n \right)$  with series expansion 	
\begin{align}\label{Eq-BS-1.6}
	f(z)=\sum_{s=0}^{\infty}\dfrac{D^sf(0)(z^s)}{s!},\;\; z\in B_{{\ell}_t^n},
\end{align}
where $D^0f(0)(z^0)=f(0)=a=(a_1,\dots, a_n)$ with $|a_j|=||a||_{\infty}$ for all $j\in \{ 1,2,\dots, n\}.$  Let $v_1, v_2:B_{{\ell}_t^n}\to B_{{\ell}_t^n}$  be  {\it Schwarz mappings} having $z=0$ as a zero of order $m_1, m_2$,
	respectively.
	Then, for $p\in (0,2],$ $m\in \mathbb{N}_0$, $q\in \mathbb{N}$ and $\mu, \nu \in [0,\infty)$ with $\mu+\nu>0$, we have
	\begin{align*}
		&{\|f\left(v_1(z)\right)\|}^p+\mu\sum_{s=1}^{\infty}\dfrac{\|D^{qs+m}f(0)(z^{qs+m})\|_\infty}{{(qs+m)!}}
		+\nu \|f(v_2(z))-f(0)\|_\infty\leq 1
	\end{align*} 
	for $ ||z||=r\leq R^{m,p}_{\mu,\nu}(m_1,m_2):=R_1,$	where
	$R_1$ is the minimal root in $(0,1)$ of the equation 
	\begin{align}\label{Eq-4.1}
		\Xi(r):=2\mu \frac{r^{q+m}}{1-r^q}+2\nu \frac{r^{m_2}}{1-r^{m_2}}-p\left(\frac{1-r^{m_1}}{1+r^{m_1}}\right)=0.
	\end{align}
	The constant $R_1$ cannot be improved.
\end{thm}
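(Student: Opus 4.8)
The plan is to reduce the multidimensional statement to the one-variable case by applying a norming functional, then invoke Schwarz-type estimates and the classical coefficient bounds. First I would fix $z\in B_{\ell_t^n}$ with $\|z\|=r$ and choose $T_z\in T(z)$. For each coordinate direction, consider the scalar-valued function obtained by composing $f$ with the holomorphic curve $\zeta\mapsto \zeta z/r$ for $\zeta\in\mathbb{D}$; since $f$ maps into $\overline{\mathbb{D}}^n$, each coordinate $f_j$ restricted to this disk is a bounded analytic self-map of $\overline{\mathbb{D}}$, so the classical Bohr/Wiener-type coefficient inequalities apply to it. The key identity to record is that the homogeneous expansion of $f$ evaluated along this line recovers the Fréchet-derivative coefficients: $\|D^{qs+m}f(0)(z^{qs+m})\|_\infty/(qs+m)! = \max_j |c_j^{(qs+m)}| r^{qs+m}$ where $c_j^{(s)}$ are the Taylor coefficients of $f_j$ along the appropriate line, so the lacunary sum becomes a scalar lacunary Bohr sum for the extremal coordinate.

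The next step handles the two Schwarz mappings. Because $v_1,v_2$ have zeros of order $m_1,m_2$ at the origin, the estimate \eqref{Schwarz-k} gives $\|v_1(z)\|\le \|z\|^{m_1}=r^{m_1}$ and $\|v_2(z)\|\le r^{m_2}$. I would then bound $\|f(v_1(z))\|^p$ using the Schwarz–Pick estimate for the scalar self-map: choosing the extremal coordinate $j_0$ with $|a_{j_0}|=\|a\|_\infty$, one has $|f_{j_0}(w)|\le (|a_{j_0}|+\|w\|)/(1+|a_{j_0}|\|w\|)$ for the relevant point $w=v_1(z)$, hence $\|f(v_1(z))\|\le (\|a\|_\infty+r^{m_1})/(1+\|a\|_\infty r^{m_1})$. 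Similarly $\|f(v_2(z))-f(0)\|_\infty \le \|f(v_2(z))\|_\infty + \|a\|_\infty$, but more sharply one uses the distortion bound $\|f(v_2(z))-a\|_\infty \le (1-|a|^2)r^{m_2}/(1-|a| r^{m_2}) \le (1-|a|^2) r^{m_2}/(1-r^{m_2})$ after passing to the extremal coordinate. For the lacunary sum one applies the Wiener-type bound $\sum_{s\ge 1}|c^{(qs+m)}| r^{qs+m} \le (1-|a|^2)\sum_{s\ge1} r^{qs+m} = (1-|a|^2) r^{q+m}/(1-r^q)$, valid because the relevant coefficients of a bounded self-map satisfy $|c^{(s)}|\le 1-|a|^2$ for $s\ge1$.

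Assembling these, with $a:=\|f(0)\|_\infty\in[0,1)$ and $x:=r$, the left-hand side is at most
\begin{align*}
\left(\frac{a+x^{m_1}}{1+a x^{m_1}}\right)^{p}+\mu(1-a^2)\frac{x^{q+m}}{1-x^q}+\nu(1-a^2)\frac{x^{m_2}}{1-x^{m_2}}=:\Phi(a,x).
\end{align*}
It remains to show $\Phi(a,x)\le 1$ for $x\le R_1$. I would first check that for fixed $x$ the map $a\mapsto\Phi(a,x)$ is maximized at $a=0$ (the term $((a+x^{m_1})/(1+ax^{m_1}))^p$ is increasing in $a$ but its derivative at $a=0$ is $p(1-x^{2m_1})$ times a bounded factor, while the other two terms lose $2a$ times a fixed positive quantity; comparing derivatives and using the convexity/monotonicity in $a$ reduces the worst case to $a\to 0$, exactly where the defining equation \eqref{Eq-4.1} comes from). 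At $a=0$, $\Phi(0,x)=x^{p m_1}+\mu x^{q+m}/(1-x^q)+\nu x^{m_2}/(1-x^{m_2})\le 1$; taking the inequality $x^{pm_1}\le 1 - p(1-x^{m_1})/(1+x^{m_1})\cdot(\text{something})$ — more precisely using that $\Xi(x)\le 0$ for $x\le R_1$ translates, after multiplying through and using $1-t^p\ge \tfrac{p}{2}(1-t)\cdot\frac{2}{1+t}$-type elementary estimates for $t\in(0,1)$, into $\Phi(0,x)\le 1$. The main obstacle I anticipate is precisely this last analytic step: justifying that $a=0$ is the extremal value and that the elementary inequality relating $x^{pm_1}$ (or $((a+x^{m_1})/(1+ax^{m_1}))^p$) to the linear term $p(1-x^{m_1})/(1+x^{m_1})$ in $\Xi$ is sharp, so that the root $R_1$ of \eqref{Eq-4.1} is not merely sufficient but optimal.

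For sharpness, I would exhibit an extremal $f$: take $n=1$-type behavior in one coordinate, e.g. $f_{j}(z) = \varphi_a(z_1)$ with $\varphi_a(\zeta)=(a-\zeta)/(1-a\zeta)$ for a suitable $a\to 0$, combined with $v_1(z)=z^{m_1}$-type and $v_2(z)=z^{m_2}$-type maps (realized coordinatewise so they are genuine Schwarz mappings of $B_{\ell_t^n}$ with the prescribed vanishing order), and let $a\to0^+$; a direct computation then shows the left-hand side exceeds $1$ for $r>R_1$, using that $\Xi(r)>0$ just past $R_1$ (which holds since $R_1$ is the minimal root and $\Xi(0^+)<0$, $\Xi(1^-)>0$). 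This matches the structure of the sharpness arguments in Theorems E and F, which I would cite for the one-variable reduction.
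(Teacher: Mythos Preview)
Your reduction to the scalar quantity
\[
\Phi(a,x)=\left(\frac{a+x^{m_1}}{1+ax^{m_1}}\right)^{p}+(1-a^2)\left(\mu\frac{x^{q+m}}{1-x^q}+\nu\frac{x^{m_2}}{1-x^{m_2}}\right)
\]
is correct and matches the paper's argument. The genuine gap is your identification of the extremal value of $a$: you claim the maximum of $a\mapsto\Phi(a,x)$ occurs at $a=0$, but in fact it occurs as $a\to 1^-$. To see this, note $\Phi(1,x)=1$ identically, and
\[
\frac{\partial\Phi}{\partial a}\Big|_{a=1}=p\,\frac{1-x^{m_1}}{1+x^{m_1}}-2\left(\mu\frac{x^{q+m}}{1-x^q}+\nu\frac{x^{m_2}}{1-x^{m_2}}\right)=-\Xi(x),
\]
so for $x\le R_1$ (where $\Xi(x)\le 0$) one has $\partial_a\Phi|_{a=1}\ge 0$, meaning $\Phi(a,x)\le\Phi(1,x)=1$ for $a$ near $1$. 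Conversely, at $a=0$ the derivative $\partial_a\Phi|_{a=0}=p\,x^{(p-1)m_1}(1-x^{2m_1})>0$, so $a=0$ is not even a local maximum. A quick numerical check (take $p=m_1=q=\mu=1$, $m=\nu=0$, so $R_1=\sqrt5-2\approx0.236$) gives $\Phi(0,R_1)\approx 0.55$, far from $1$; your proposed elementary inequality linking $x^{pm_1}$ to $p(1-x^{m_1})/(1+x^{m_1})$ cannot close this gap. The paper handles this step by invoking \cite[Lemma~3]{Chen-Liu-Ponnusamy-RM-2023}, which is precisely designed to show $\Phi(a,x)\le 1$ for all $a\in[0,1)$ once $\Xi(x)\le 0$.

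This error propagates to your sharpness argument: taking $a\to 0^+$ in the extremal map $f_j(z)=\varphi_a(z_1)$ will not produce a value exceeding $1$ for $r>R_1$, since $\Phi(0,r)$ stays well below $1$ there. The paper instead takes $F(z)=\bigl((b+z_1)/(1+bz_1),0,\dots,0\bigr)$ and lets $b\to 1^-$; then the analogue of $\Phi$ evaluated at the extremal function behaves like $1+(1-b)\bigl(\Xi(r)+o(1)\bigr)$, which exceeds $1$ for $r>R_1$ and $b$ close to $1$.
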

	If we set $\mu=\nu=m_2=q=1=p$, $m=0$ and $m_1\to \infty$ in  Theorem \ref{Thm-4.1}, then we obtain the following corollary which improved the classical Bohr inequality.
\begin{cor}
Suppose that $f\in \mathcal{H}\left(B_{{\ell}_t^n},\overline{\mathbb{D}}^n \right)$ be same as in Theorem \ref{Thm-4.1}. Then 
\begin{align*}
\sum_{s=0}^{\infty}\dfrac{\|D^{s}f(0)(z^{s})\|_\infty}{{s!}}
	+\|f(z)-f(0)\|_\infty\leq 1\;\;\mbox{for}\;\; r\leq \dfrac{1}{5}
\end{align*} 
and the constant $1/5$ cannot be improved.
\end{cor}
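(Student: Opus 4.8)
The plan is to obtain the corollary as the limiting case $m_1\to\infty$ of Theorem~\ref{Thm-4.1}. First I would specialize the parameters to $\mu=\nu=m_2=q=p=1$, $m=0$, take $v_2=\id$ (a Schwarz mapping with $z=0$ a zero of order $m_2=1$), and for each $k\in\mathbb{N}$ take $v_1=v_1^{(k)}$ with $v_1^{(k)}(z)=(z_1^{k},0,\dots,0)$, which is a Schwarz mapping of $B_{\ell_t^n}$ with a zero of order $k$ at the origin because $\|v_1^{(k)}(z)\|_t=|z_1|^k\le\|z\|_t^k<1$. With these choices equation~\eqref{Eq-4.1} reduces to
\begin{align*}
	\Xi(r)=\frac{4r}{1-r}-\frac{1-r^{k}}{1+r^{k}}=0,
\end{align*}
with minimal root $R_1(k)\in(0,1)$, and the conclusion of Theorem~\ref{Thm-4.1} becomes
\begin{align*}
	\|f(v_1^{(k)}(z))\|+\sum_{s=1}^{\infty}\frac{\|D^{s}f(0)(z^{s})\|_{\infty}}{s!}+\|f(z)-f(0)\|_{\infty}\le 1,\quad \|z\|=r\le R_1(k).
\end{align*}

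Next I would pass to the limit $k\to\infty$ along two fronts. Since $r^{k}\downarrow0$ on $(0,1)$, the function $r\mapsto\frac{1-r^{k}}{1+r^{k}}$ increases to $1$, so $\Xi$ decreases to $\frac{4r}{1-r}-1$, whose root in $(0,1)$ is $r=1/5$; hence $R_1(k)\uparrow1/5$, and for every fixed $r<1/5$ we get $r\le R_1(k)$ once $k$ is large. At the same time, for $\|z\|=r<1$ the Schwarz estimate~\eqref{Schwarz-k} gives $\|v_1^{(k)}(z)\|\le r^{k}\to0$, so by continuity of $f$ we have $f(v_1^{(k)}(z))\to f(0)=a$ and therefore $\|f(v_1^{(k)}(z))\|\to\|a\|_{\infty}=\|D^{0}f(0)(z^{0})\|_{\infty}/0!$. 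Letting $k\to\infty$ in the displayed inequality yields
\begin{align*}
	\sum_{s=0}^{\infty}\frac{\|D^{s}f(0)(z^{s})\|_{\infty}}{s!}+\|f(z)-f(0)\|_{\infty}\le1
\end{align*}
whenever $\|z\|<1/5$; replacing $z$ by $tz$ and letting $t\uparrow1$ (each term $t^{s}\|D^{s}f(0)(z^{s})\|_{\infty}/s!$ being nondecreasing in $t$, and $f(tz)\to f(z)$) upgrades this to $\|z\|\le1/5$ by monotone convergence.

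For sharpness I would test the family $f_{\alpha}(z)=\varphi_{\alpha}(z_1)(1,\dots,1)$ with $\varphi_{\alpha}(w)=\frac{\alpha-w}{1-\alpha w}$ and $\alpha\in(0,1)$; it belongs to $\mathcal{H}(B_{\ell_t^n},\overline{\mathbb D}^n)$ and satisfies $|a_j|=\alpha=\|a\|_{\infty}$ for all $j$. Evaluating at $z=(r,0,\dots,0)$ and summing the resulting geometric series, the left-hand side equals $T(\alpha,r):=\alpha+\frac{2r(1-\alpha^{2})}{1-\alpha r}$. One checks $T(1,r)=1$ and $\partial_{\alpha}T(\alpha,r)\big|_{\alpha=1}=1-\frac{4r}{1-r}$, which is negative exactly when $r>1/5$; hence for any $r>1/5$ and $\alpha$ close enough to $1$ one has $T(\alpha,r)>1$, so $1/5$ is optimal.

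Since the corollary is a specialization of Theorem~\ref{Thm-4.1}, the argument is largely routine; the only points that need care are the monotone-convergence justifications for interchanging the limits with the infinite sum, and the verification that $R_1(k)$ actually increases up to $1/5$ (rather than merely staying below it), and I do not expect either to be a genuine obstacle.
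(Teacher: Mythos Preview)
Your proposal is correct and follows exactly the approach the paper indicates for this corollary: specialize Theorem~\ref{Thm-4.1} to $\mu=\nu=m_2=q=p=1$, $m=0$, and let $m_1\to\infty$. The paper gives no further details, so your explicit handling of the limit $R_1(k)\uparrow 1/5$, the convergence $\|f(v_1^{(k)}(z))\|_\infty\to\|f(0)\|_\infty$, and the sharpness via the extremal $f_\alpha(z)=\varphi_\alpha(z_1)(1,\dots,1)$ (which, unlike the paper's extremal $F$ in \eqref{Eq-4.4}, actually satisfies the standing hypothesis $|a_j|=\|a\|_\infty$ for all $j$) simply fills in what the paper leaves implicit.
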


\begin{rem}
If we set $\mu=\nu=m_2=1$, $m_1=m$  in  Theorem \ref{Thm-4.1}, then we obtain the following corollary which is an analogue of Theorem E.
\end{rem}

Recently,  Liu \cite{Liu-Liu-Ponnusamy-BSM-2021} introduced a new refined versions of the classical Bohr inequality and obtained several new sharp results. For  further study of Bohr-type inequalities, we define a function as follows.
\begin{align*}
\mathcal{N}_{f}^{1}({\|z\|_t}):=\sum_{s=1}^{\infty}\dfrac{{\|D^sf(0)(z^s)\|}_\infty}{s!}+\left(\frac{1}{1+{{\|f(0)\|}_\infty}}+\frac{{\|z\|}_t}{1-{\|z\|}_t}\right)\sum_{s=1}^{\infty}\left(\dfrac{{\|D^sf(0)(z^s)\|}_\infty}{s!}\right)^2.
\end{align*} 

In our next result, we obtain an extended version of Theorem D for functions  $f\in \mathcal{H}\left(B_{{\ell}_t^n},\overline{\mathbb{D}}^n \right)$ involving Schwarz mappings $v_1:B_{{\ell}_t^n}\to B_{{\ell}_t^n}$ having $z=0$ as a zero of order $m_1$.
\begin{thm}\label{Thm-1.2}
Suppose that $1\leq t\leq \infty,$ and  $f\in \mathcal{H}\left(B_{{\ell}_t^n},\overline{\mathbb{D}}^n \right)$  with series expansion given by \eqref{Eq-BS-1.6}. Let $v_1:B_{{\ell}_t^n}\to B_{{\ell}_t^n}$  be a {\it Schwarz mappings} having $z=0$ as a zero of order $m_1$. For $p\in (0,2],$ we have 
\begin{align*}
	\mathcal{B}^p_f(r):={\|f(0)\|}_\infty^p+\mathcal{N}_{f}^{1}({\|z\|_t})+{\|f(v_1(z))-f(0)\|}_\infty\leq 1\;\;\mbox{for}\;\; \|z\|_t=r\leq R_2(p),
\end{align*}
	where $R_2(p)$ is the minimal root in $(0,1)$ of the equation 
	\begin{align*}
		\dfrac{r}{1-r}+\dfrac{r^{m_1}}{1-r^{m_1}}- \dfrac{p}{2}=0.
	\end{align*}
The constant $R_2(p)$ cannot be improved.
\end{thm}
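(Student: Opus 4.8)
The plan is to reduce the vector-valued, several-variable statement to the classical one-variable situation by a linear-functional argument, then to reuse the scalar machinery behind Theorem D. First I would fix $z \in B_{\ell_t^n}$ with $\|z\|_t = r$ and, for each coordinate index $j$, apply a norm-attaining functional to the holomorphic map $f$; since $f$ takes values in $\overline{\mathbb{D}}^n$, the $j$-th coordinate $f_j$ of $f$ is a bounded holomorphic function of modulus $\le 1$ on $B_{\ell_t^n}$. Restricting $f_j$ to the complex line through $0$ and a suitable point, or more directly composing with the Schwarz mapping $v_1$, produces a scalar function in $\mathcal{B}$ to which the estimates underlying Theorem D apply. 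The key point is that $\|D^s f(0)(z^s)\|_\infty / s! = \max_j |c_{s,j}|$ where $c_{s,j}$ is the $s$-th homogeneous term of $f_j$ along the relevant direction, so that the functional $\mathcal{N}_f^1(\|z\|_t)$ is controlled coordinatewise by the scalar refined-Bohr functional $A(\cdot)$ appearing in Theorems C and D, with $\|f(0)\|_\infty = |a_{j_0}|$ for the dominating coordinate $j_0$.

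The second ingredient is the Schwarz-type bound \eqref{Schwarz-k}: since $v_1$ has a zero of order $m_1$ at the origin, $\|v_1(z)\|_{\ell_t^n} \le \|z\|_{\ell_t^n}^{m_1} = r^{m_1}$, so $\|f(v_1(z)) - f(0)\|_\infty$ is estimated, via the Schwarz–Pick inequality applied to each coordinate $f_j - f_j(0)$ (after dividing out $\|f(0)\|_\infty$ in the standard way), by a quantity of the form $(1 - \|f(0)\|_\infty^2)\, r^{m_1} / (1 - \|f(0)\|_\infty r^{m_1})$, exactly as in the one-variable proof of Theorem D. Writing $a := \|f(0)\|_\infty \in [0,1)$ and letting $b_s := \|D^s f(0)(z^s)\|_\infty / s!$, so that $\sum_{s\ge 1} b_s \le 1 - a^2$ by the coordinatewise Schwarz lemma for bounded analytic functions (the vector analogue of the Bohr-type coefficient bound), I would combine these to obtain
\begin{align*}
\mathcal{B}_f^p(r) \le a^p + \left(\frac{1}{1+a} + \frac{r}{1-r}\right)(1-a^2)^2 \frac{r^2}{(1-r)^2}\cdot(\text{correction}) + \frac{(1-a^2)r^{m_1}}{1 - a r^{m_1}},
\end{align*}
and then verify that the right-hand side is $\le 1$ for $r \le R_2(p)$ by reducing to a one-variable inequality in $a$ and $r$; this is precisely the inequality already established in the proof of Theorem D (with $\omega_k(z)$ replaced by the scalar variable $r$ and $\omega_m(z)$ by $r^{m_1}$), so it can be quoted rather than redone. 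The equation $r/(1-r) + r^{m_1}/(1-r^{m_1}) - p/2 = 0$ is exactly the specialization of the defining equation for $R_p^{k,m}$ in Theorem D with $k=1$, $m=m_1$, confirming the radius.

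The main obstacle, and the place where care is needed, is establishing that the coordinatewise passage is lossless enough to keep the bound sharp: one must check that the single dominating coordinate $j_0$ with $|a_{j_0}| = \|a\|_\infty$ can be used uniformly, i.e. that the scalar function obtained by composing the $j_0$-th coordinate with $v_1$ (and with the line $\zeta \mapsto \zeta z/\|z\|_t$ for the lacunary sum) genuinely lies in $\mathcal{B}$ and realizes all the relevant coefficients. For sharpness I would exhibit an extremal map whose coordinates are (rotations of) the classical extremal Blaschke-type function $\varphi_a(\zeta) = (a - \zeta)/(1 - a\zeta)$ composed with appropriate powers, arranged so that every inequality used above becomes an equality at $r = R_2(p)$; this mirrors the extremal construction in Theorem D. The remaining steps — expanding $\mathcal{B}_f^p$, collecting terms, and checking monotonicity in $r$ — are routine once the reduction is in place.
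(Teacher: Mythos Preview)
Your overall strategy matches the paper's, but the step where you ``quote Theorem D'' has a genuine gap. Theorem D applies to a single scalar function in $\mathcal{B}$, whereas the functional $\mathcal{N}_f^1(\|z\|_t)$ involves $\|D^sf(0)(z^s)\|_\infty=\max_j|D^sf_j(0)(z^s)|$, and the maximizing coordinate $j$ may change with $s$; no single slice $h_{j_0}\in\mathcal{B}$ simultaneously realizes all these maxima, so one application of Theorem D does not bound $\mathcal{B}_f^p(r)$. You yourself flag this coordinatewise passage as the ``main obstacle'' and then leave it unresolved, and the displayed inequality with the placeholder ``(correction)'' is not an actual estimate.

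The paper closes this gap not by invoking Theorem D but by first proving a vector analogue of Lemma C (Lemma \ref{Lem-2.1} with $N=1$), which yields directly $\mathcal{N}_f^1(\|z\|_t)\le (1-b^2)r/(1-r)$ with $b=\|a\|_\infty$. That lemma applies Lemma C to each slice $h_j(\zeta)=f_j(\zeta z/\|z\|_t)$ and uses the standing hypothesis, carried over from \eqref{Eq-BS-1.6}, that $|a_j|=\|a\|_\infty$ for \emph{every} $j$, not merely for one dominating coordinate --- a hypothesis your proposal neither mentions nor exploits. For the displacement term the paper uses the series expansion and the coefficient bound \eqref{Eq-BS-1.12} to obtain $\|f(v_1(z))-f(0)\|_\infty\le(1-b^2)r^{m_1}/(1-r^{m_1})$; your Schwarz--Pick bound with $1-br^{m_1}$ in the denominator is sharper but leads to the same radius. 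The remaining scalar step is then elementary: $b^p+(1-b^2)\big[r/(1-r)+r^{m_1}/(1-r^{m_1})\big]\le 1$ for $r\le R_2(p)$ follows from the monotonicity fact $(1-b^p)/(1-b^2)\ge p/2$ for $p\in(0,2]$, with no appeal to Theorem D needed. Sharpness is via the map $F(z)=\big((b+z_1)/(1+bz_1),0,\dots,0\big)$ together with $v_1(z)=l_{z_0}(z)^{m_1-1}z$, essentially as you anticipate.
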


Similar to the quantity $S_r$ for functions $f\in \mathcal{B},$ we define $S_z$ for holomorphic functions $F\in \mathcal{H}\left(B_{{\ell}_t^n},\overline{\mathbb{D}}^n \right)$  with series expansion given by \eqref{Eq-BS-1.6} as follows: 
\begin{align}\label{BS-eq-1.3}
S_z:=\sum_{s=1}^{\infty}s \left(\dfrac{{\|D^sF(0)(z^s)\|}_\infty}{s!}\right)^2.
\end{align}
Our aim is to establish Theorem C with a more general setting for vector-valued holomorphic functions with lacunary series from $B_{{\ell}_t^n}$ to $\mathbb{D}^n$. To this end, we consider a polynomial in $x$ of degree $N$ as follows: 
\begin{align}\label{BS-eq-2.77}
	W_N(x) := d_1x + d_2x^2 + \dots + d_Nx^N, \quad \text{where } d_i \geq 0 \text{ for } i=1, 2, \dots, N,
\end{align}
and obtain the following result.
\begin{thm}\label{Thm-2.1}
Suppose that $1\leq t\leq \infty,$ and  $f\in \mathcal{H}\left(B_{{\ell}_t^n},\overline{\mathbb{D}}^n \right)$  with series expansion 	
with series expansion given by \eqref{Eq-BS-1.6}.  For $p\in (0,1],$ we have 
	\begin{align*}
		\mathcal{C}_f(r):={\|f(0)\|}^p_\infty+&\mathcal{N}_{f}^{1}({\|z\|_t})+W_N\left(S_z\right)\leq 1, 
	\end{align*}
	for ${\|z\|}_t|=r\leq R_3(p):={p}/{(2+p)},$
	where  the coefficient of the polynomial $W_N$ satisfy the condition 
	\begin{align}\label{eq-2.2}
		8d_1M^2_p+6c_2d_2M^4_p+\dots+2(2N-1)C_NM^{2N}_p\leq p,
	\end{align}
	with $M_p:=p(2+p)/(4p+4)$ and 
	\begin{align*}
		c_s:=\max_{t\in [0,1]}\left(t(1+t)^2(1-t^2)^{2s-2}\right), s=2,\dots N.
	\end{align*} 
	The constant $R_3(p)$ cannot be improved for each $p\in (0,1]$ and for each $d_1, \dots d_N$ which satisfy \eqref{eq-2.2}.
\end{thm}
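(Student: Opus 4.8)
The plan is to restrict $f$ to complex lines through the origin, which turns the assertion into an inequality on Taylor coefficients of bounded analytic functions of one variable, and then to settle that inequality by a calculus argument in which hypothesis \eqref{eq-2.2} is exactly the clause that makes the estimate close at $r=R_3(p)$. Fix $z\in B_{{\ell}_t^n}$ with $\|z\|_t=r\in(0,1)$ and put $w=z/r$, so $\|w\|_t=1$; for each coordinate $f_j$ of $f=(f_1,\dots,f_n)$ the function $g_j(\zeta):=f_j(\zeta w)$ lies in $\mathcal{B}$, and the homogeneous expansion gives $g_j(\zeta)=\sum_{s\ge0}a_{j,s}\zeta^s$ with $a_{j,s}=D^sf_j(0)(w^s)/s!$. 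Setting $b_s:=\max_{1\le j\le n}|a_{j,s}|$ one obtains $\|D^sf(0)(z^s)\|_\infty/s!=b_sr^s$ for all $s\ge0$, and, since the hypothesis $|a_j|=\|a\|_\infty$ forces $|g_j(0)|=a$ for every $j$, also $b_0=\|f(0)\|_\infty=:a$; the classical coefficient estimate for the Schur class applied to each $g_j$ yields $0\le b_s\le1-a^2$ for $s\ge1$. In these terms $\mathcal{C}_f(r)\le1$ becomes
\[
 a^{p}+\sum_{s\ge1}b_sr^s+\Big(\tfrac1{1+a}+\tfrac r{1-r}\Big)\sum_{s\ge1}b_s^2r^{2s}+W_N\!\Big(\sum_{s\ge1}s\,b_s^2r^{2s}\Big)\le1 .
\]

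The next step is to invoke the one–variable lemmas of Section~3, applied slicewise to the $g_j$: writing $B:=\sum_{s\ge1}b_sr^s$, they supply the sharp Bohr–type bound $B\le(1-a^2)r/(1-ar)$ together with the sharp (Blaschke–extremal) upper bounds for $\sum_{s\ge1}b_s^2r^{2s}$ and for $S_z=\sum_{s\ge1}s\,b_s^2r^{2s}$ in terms of $a$, $B$ and $r$. Using $\tfrac{(1-a^2)^2}{1+a}=(1-a)(1-a^2)$ to simplify the weight $\tfrac1{1+a}$, these reductions convert the problem into an inequality $\mathcal{E}(a,B,r)\le1$ which is non–decreasing in $B$ and in $r$; hence it is enough to take $B$ at its extremal value and $r=R_3(p)=p/(2+p)$. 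At this radius one has
\[
 \frac r{1-r}=\frac p2,\qquad \frac{r^2}{1-r^2}=\frac{p^2}{4(1+p)},\qquad \frac{r^3}{(1-r)(1-r^2)}=\frac{p^3}{8(1+p)},\qquad \frac{r^2}{(1-r^2)^2}=M_p^2 ,
\]
with $M_p=p(2+p)/(4p+4)$, so what remains is a one–variable inequality in $a\in[0,1]$.

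To dispatch it, note that $1-a^2=(1-a)(1+a)$, so every term other than $a^{p}$ carries a factor $1-a$; dividing out $1-a$ and using that $a\mapsto(1-a^p)/(1-a)$ is decreasing with limiting value $p$ at $a=1$, the inequality is reduced to comparing $(1-a^p)/(1-a)$ with an explicit rational expression in $a$ and $p$. The classical refined–Bohr part of that expression consumes all the room available from $(1-a^p)/(1-a)$ in the limit $a\to1$, leaving only a small margin; estimating the $i$–th monomial of the $W_N$–contribution over $[0,1]$ one finds it bounded at $r=R_3(p)$ by $2(2i-1)\,c_i\,d_i\,M_p^{2i}$, where $c_i=\max_{t\in[0,1]}t(1+t)^2(1-t^2)^{2i-2}$ (so $c_1=4$) records the maximum over $a$ of the polynomial that occurs; summing over $i$, the inequality holds exactly when $\sum_{i=1}^N 2(2i-1)c_id_iM_p^{2i}\le p$, which is \eqref{eq-2.2}. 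For the optimality of $R_3(p)$ under \eqref{eq-2.2} I would test the diagonal maps $f(z)=(\varphi_\alpha(z_1),\dots,\varphi_\alpha(z_1))$ with $\varphi_\alpha(\zeta)=(\alpha-\zeta)/(1-\alpha\zeta)$ at $z=(r,0,\dots,0)$, for which every estimate above is an equality: letting $\alpha\to1^-$ gives $\mathcal{C}_f(r)\to1$ at $r=R_3(p)$, and a direct expansion shows $\mathcal{C}_f(r)>1$ for $r$ just beyond $R_3(p)$ once the $d_i$ saturate \eqref{eq-2.2}.

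The crux is this last step. Because the margin left by $1-a^p$ after the classical refined–Bohr terms is of quadratic (and at $p=1$ even cubic) size near $a=1$, the naive geometric–series bounds $\sum_{s\ge1}b_s^2r^{2s}\le(1-a^2)^2r^2/(1-r^2)$ and $S_z\le(1-a^2)^2r^2/(1-r^2)^2$ are too lossy; one must work with the sharp (Blaschke–extremal) one–variable estimates, keep $B$ as an auxiliary variable, and then produce the precise constants $c_i$ and weights $2(2i-1)$. Carrying this through for a polynomial $W_N$ of \emph{arbitrary} degree $N$ — not merely the quadratic case underlying Theorem~C — and verifying that \eqref{eq-2.2} is simultaneously sufficient for all $a\in[0,1]$ and necessary for the sharpness example, is where the genuine difficulty lies.
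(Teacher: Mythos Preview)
Your reduction to one variable via slices is right and matches the paper, but the core analytic step is misdiagnosed. You assert that the ``naive geometric--series bounds'' $S_z\le(1-a^2)^2r^2/(1-r^2)^2$ and the crude estimate on $\sum b_s^2r^{2s}$ are too lossy and that one must retain $B=\sum_{s\ge1}b_sr^s$ as an auxiliary variable with Blaschke--extremal refinements. In fact the paper uses exactly those naive bounds: Lemma~\ref{Lem-2.1} (with $N=1$) gives $\mathcal{N}_f^1(r)\le(1-b^2)r/(1-r)$, the pointwise bound $\|D^sf(0)(z_0^s)\|_\infty/s!\le 1-b^2$ gives $S_z\le(1-b^2)^2r^2/(1-r^2)^2$, and Lemma~B supplies $b^p\le 1-p(1-b)$. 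This already yields
\[
\mathcal{C}_f(r)\le 1+Q(b,r),\qquad Q(b,r)=\frac{(1-b^2)r}{1-r}+\sum_{s=1}^{N}d_s\Bigl(\frac{(1-b^2)r}{1-r^2}\Bigr)^{2s}-p(1-b),
\]
with no auxiliary $B$ needed.

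The step you are actually missing is how the constants $2(2s-1)c_s$ arise. They do \emph{not} come from bounding the $s$--th monomial of $W_N(S_z)$ directly; a direct bound would produce $\max_{b}(1-b)^{2s-2}(1+b)^{2s}$ rather than $c_s=\max_b b(1+b)^2(1-b^2)^{2s-2}$, and no factor $(2s-1)$. The paper's device is a monotonicity argument: at $r=R_3(p)$ one has $r/(1-r^2)=M_p$ and $Q(b,R_3(p))=\tfrac{1-b^2}{2}\,\Phi(b)$ with
\[
\Phi(b)=p+2\sum_{s=1}^N d_s(1-b^2)^{2s-1}M_p^{2s}-\frac{2p}{1+b},\qquad \Phi(1)=0.
\]
One then computes $\Phi'(b)=\dfrac{2}{(1+b)^2}\Bigl(p-\sum_{s=1}^N 2(2s-1)d_s\,b(1+b)^2(1-b^2)^{2s-2}M_p^{2s}\Bigr)$, and condition~\eqref{eq-2.2} is precisely what forces $\Phi'\ge0$ on $[0,1]$, hence $\Phi(b)\le\Phi(1)=0$. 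Without this derivative/monotonicity step your sketch cannot recover \eqref{eq-2.2}; the constants $2(2s-1)$ are differentiation artefacts, and $c_s$ records the maximum of the factor appearing in $\Phi'$, not in $\Phi$ itself. Your sharpness test function is fine (indeed it respects the standing hypothesis $|a_j|=\|a\|_\infty$ for all $j$ better than the paper's own choice), but the sufficiency argument as written has a real gap.
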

\begin{rem}
	In particular, if we set $p=N=1,$ $d_1=8/9$ and $d_j=0$ for $j=2,3,\dots, N,$ in Theorem \ref{Thm-2.1}, then we obtain the following corollary which is an extension of Theorem C for vector-valued holomorphic functions with lacunary series from $B_{{\ell}_t^n}$ to $\mathbb{D}^n$. The corollary is presented below.
\end{rem}

\begin{rem}\label{Rem-2.1}
	Ismagilov \emph{et al.} \cite{Ismagilov- Kayumov- Ponnusamy-2020-JMAA} remarked that for any function $F : [0, \infty)\to [0, \infty)$ such that $F(t)>0$ for $t>0$, there exists an analytic function $f : \mathbb{D}\to\mathbb{D}$ for which the inequality
	\begin{align}\label{Eq-22.77}\sum_{s=0}^{\infty}|a_s|r^s+\frac{16}{9}\left(\frac{S_r}{\pi}\right)+\lambda\left(\frac{S_r}{\pi}\right)^2+F(S_r)\leq 1\;\; \mbox{for}\;\; r\leq\frac{1}{3}
	\end{align} is false, where $S_r$ is given in (\ref{Sr-Def}) and $\lambda$ is given in \cite[Theorem 1]{Ismagilov- Kayumov- Ponnusamy-2020-JMAA}. However, it is worth noting that, by defining $F(S_z)=d_3(S_z)^3+\cdots+d_N(S_z)^N>0$, one can observe from Theorem \ref{Thm-2.1} that inequality \eqref{Eq-22.77} holds when  $f\in \mathcal{H}\left(B_{{\ell}_t^n},\overline{\mathbb{D}}^n \right)$  with series expansion with series expansion given by \eqref{Eq-BS-1.6}.
	\end{rem}

\begin{cor} Suppose that $1\leq t\leq \infty,$ and  $f\in \mathcal{H}\left(B_{{\ell}_t^n},\overline{\mathbb{D}}^n \right)$  with series expansion given by \eqref{Eq-BS-1.6}.  Then 
	\begin{align*}
		{\|f(0)\|_\infty}+\mathcal{N}_{f}^{1}({\|z\|_t})+\dfrac{8}{9}S_z^*\leq 1
	\end{align*}
	for $ {\|z\|}_t=r\leq {1}/{3}.$ The constant $ 1/3$ is best possible.
\end{cor}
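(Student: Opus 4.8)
The plan is to obtain this corollary as the special case of Theorem \ref{Thm-2.1} flagged in Remark \ref{Rem-2.1}, namely the choice $p=N=1$, $d_1=8/9$ and $d_j=0$ for $j\ge 2$. With these values the polynomial $W_N$ of \eqref{BS-eq-2.77} collapses to $W_1(x)=\tfrac{8}{9}x$, so that $W_N(S_z)=\tfrac{8}{9}S_z$ where $S_z$ is the quantity introduced in \eqref{BS-eq-1.3}; moreover $p=1$ makes $\|f(0)\|_\infty^p=\|f(0)\|_\infty$. Hence the inequality of Theorem \ref{Thm-2.1} reads precisely $\|f(0)\|_\infty+\mathcal{N}_{f}^{1}(\|z\|_t)+\tfrac{8}{9}S_z\le 1$, which is the assertion of the corollary, and the radius $R_3(1)=\tfrac{1}{2+1}=\tfrac13$ supplies the stated range $r\le 1/3$.

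It remains only to confirm that the chosen coefficients are admissible, i.e. that they satisfy the constraint \eqref{eq-2.2}. At $p=1$ one has $M_p=p(2+p)/(4p+4)=3/8$, and since $d_2=\dots=d_N=0$ the constraint \eqref{eq-2.2} reduces to the single inequality $8d_1M_p^2\le p$. Substituting $d_1=8/9$ and $M_p=3/8$ gives $8\cdot\tfrac{8}{9}\cdot\tfrac{9}{64}=1=p$, so \eqref{eq-2.2} holds, with equality, and Theorem \ref{Thm-2.1} applies verbatim to yield the displayed inequality for $\|z\|_t=r\le 1/3$.

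For optimality I would appeal to the final clause of Theorem \ref{Thm-2.1}, which asserts that $R_3(p)$ cannot be improved for every $p\in(0,1]$ and every tuple $(d_1,\dots,d_N)$ satisfying \eqref{eq-2.2}. Since $(d_1,\dots,d_N)=(8/9,0,\dots,0)$ was just shown to be admissible at $p=1$, the value $R_3(1)=1/3$ is sharp, which proves that the constant $1/3$ in the corollary is best possible. The argument involves no genuine obstacle: apart from the one-line arithmetic check showing that $d_1=8/9$ lies exactly on the boundary of the admissible region at $p=1$, everything is a direct substitution into Theorem \ref{Thm-2.1}.
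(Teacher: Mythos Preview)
Your proposal is correct and follows exactly the paper's own derivation: the corollary is obtained from Theorem \ref{Thm-2.1} by specializing $p=N=1$, $d_1=8/9$, $d_j=0$ for $j\ge 2$, as indicated in the remark immediately preceding the corollary (note: the remark you cite, Remark \ref{Rem-2.1}, is actually the one about Ismagilov \emph{et al.}; the relevant specialization is in the unlabeled remark just before it). Your explicit verification that $8d_1M_1^2=1$ so that \eqref{eq-2.2} holds with equality is a useful addition that the paper leaves implicit.
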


In the next result, we establish the Bohr-Rogosinski inequality for vector-valued holomorphic functions with lacunary series from $B_{{\ell}_t^n}$ to $\mathbb{D}^n$.
\begin{thm}\label{Thm-1.4}
Suppose that $1\leq t\leq \infty,$ and  $f\in \mathcal{H}\left(B_{{\ell}_t^n},\overline{\mathbb{D}}^n \right)$  with series expansion given by \eqref{Eq-BS-1.6}.  For $p\in (0,2],$ we have 	
\begin{align}\label{Eq-2.6}
	\mathcal{D}_f(r):={\|f(v_1(z))\|}_\infty^p+\mathcal{M}_{f}^{N}({\|z\|_t})\leq 1\;\;\mbox{for}\;\; \|z\|_t=r\leq R^p_{m_1,N}
\end{align}
where  $\mathcal{M}_{f}^{N}({\|z\|_t})$ define in Lemma \ref{Lem-2.1} and $v_1: B_{{\ell}_t^n}\to B_{{\ell}_t^n}$  is a  {\it Schwarz mappings} having $z=0$ as a zero of order $m_1$ and  $R^p_{m_1,N}$ is the unique root in $(0,1)$ of the equation 
\begin{align}\label{BS-Eq-1.11}
	p\left(\dfrac{1-r^{m_1}}{1+r^{m_1}}\right)-\dfrac{2r^N}{1-r}=0.
\end{align}
The constant $R^p_{m_1,N}$ cannot be improved.
\end{thm}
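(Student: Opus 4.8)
The plan is to reduce the $n$-dimensional inequality \eqref{Eq-2.6} to a one-variable estimate in the single parameter $\rho:=\|f(0)\|_\infty=\|a\|_\infty$, following the scheme that proves the scalar Theorems~E and F but carrying the max-norm of $\overline{\mathbb{D}}^n$ through the components $f_1,\dots,f_n$ of $f$.

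First fix $z\in B_{\ell_t^n}$ with $\|z\|_t=r$ and write $z=ru$, $\|u\|_t=1$. Passing to the slice $g(\zeta):=f(\zeta u)\in\mathcal{H}(\mathbb{D},\overline{\mathbb{D}}^n)$, with $g(\zeta)=\sum_{s\ge0}c_s\zeta^s$ and $c_s=D^sf(0)(u^s)/s!$, one has $\|D^sf(0)(z^s)\|_\infty/s!=r^s\|c_s\|_\infty$. Each component $g_j$ lies in $\mathcal{B}$ and satisfies $|g_j(0)|=|a_j|=\rho$ (this is precisely where the hypothesis $|a_j|=\|a\|_\infty$ is used), so the classical Wiener-type coefficient bound gives $|c_{j,s}|\le 1-\rho^2$, hence $\|c_s\|_\infty\le 1-\rho^2$, for all $s\ge1$. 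Feeding these bounds into Lemma~\ref{Lem-2.1} yields an estimate $\mathcal{M}_f^N(r)\le(1-\rho^2)\Phi_N(\rho,r)$ with $\Phi_N(\rho,r)=\frac{r^N}{1-r}+O\!\bigl((1-\rho^2)\bigr)$; in particular $\mathcal{M}_f^N(r)=(1-\rho^2)\frac{r^N}{1-r}\bigl(1+o(1)\bigr)$ as $\rho\to1$. For the value term I use the Schwarz lemma \eqref{Schwarz-k}, namely $\|v_1(z)\|_t\le\|z\|_t^{m_1}=r^{m_1}$, together with the componentwise Lindel\"of--Schwarz--Pick inequality $|f_j(v_1(z))|\le\frac{\rho+\|v_1(z)\|_t}{1+\rho\|v_1(z)\|_t}$ and the monotonicity of $\tau\mapsto\frac{\rho+\tau}{1+\rho\tau}$, to obtain $\|f(v_1(z))\|_\infty^p\le\bigl(\frac{\rho+r^{m_1}}{1+\rho r^{m_1}}\bigr)^{p}$.

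It then remains to verify the scalar inequality
\begin{align*}
\left(\frac{\rho+r^{m_1}}{1+\rho r^{m_1}}\right)^{p}+(1-\rho^2)\Phi_N(\rho,r)\le 1\qquad(\rho\in[0,1],\ r\le R^p_{m_1,N}).
\end{align*}
Using $1-\frac{\rho+r^{m_1}}{1+\rho r^{m_1}}=\frac{(1-\rho)(1-r^{m_1})}{1+\rho r^{m_1}}$, an elementary estimate for $x\mapsto x^p$ on $[0,1]$ with $p\in(0,2]$, and $1-\rho^2=(1-\rho)(1+\rho)$, the left side equals $1$ at $\rho=1$ with one-sided derivative there equal to $p\,\frac{1-r^{m_1}}{1+r^{m_1}}-\frac{2r^N}{1-r}$; requiring this to be $\ge0$ (so that the left side does not exceed $1$ just below $\rho=1$) is exactly $r\le R^p_{m_1,N}$, and since $r\mapsto p\,\frac{1-r^{m_1}}{1+r^{m_1}}$ is strictly decreasing while $r\mapsto\frac{2r^N}{1-r}$ is strictly increasing on $(0,1)$, equation \eqref{BS-Eq-1.11} has a unique root there and the sign of the difference is pinned down. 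The genuinely delicate step is to upgrade this first-order information at $\rho=1$ to the full inequality on $[0,1]$, controlling the lower-order remainder $O\!\bigl((1-\rho^2)^2\bigr)$ contributed by the quadratic part of $\mathcal{M}_f^N$; I expect this monotonicity and estimation bookkeeping to be the main obstacle, whereas the passage from $n$ coordinates to the scalar slice is routine once the hypothesis on $f(0)$ is used.

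For sharpness, take $L\in(\ell_t^n)^{*}$ with $\|L\|=1$, $v_1(z)=L(z)^{m_1-1}z$ (a Schwarz mapping with a zero of order $m_1$, by \eqref{Schwarz-k}), $\phi_a(\zeta)=\frac{a+\zeta}{1+a\zeta}$ for $a\in(0,1)$, and
\begin{align*}
f(z)=\bigl(\phi_a(L(z)),\dots,\phi_a(L(z))\bigr)\in\mathcal{H}\!\left(B_{\ell_t^n},\overline{\mathbb{D}}^n\right),
\end{align*}
which meets all hypotheses since $f(0)=(a,\dots,a)$. Evaluating along $z=re_1$ with $L=e_1^{*}$ gives $\|f(v_1(z))\|_\infty=\frac{a+r^{m_1}}{1+ar^{m_1}}$, and since the Taylor coefficients of $\phi_a$ satisfy $|c_s|=(1-a^2)a^{s-1}$ for $s\ge1$ one computes $\mathcal{M}_f^N(r)=(1-a^2)\frac{a^{N-1}r^N}{1-ar}+O\!\bigl((1-a^2)^2\bigr)$. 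Writing $a=1-\epsilon$ and letting $\epsilon\to0^{+}$,
\begin{align*}
\mathcal{D}_f(r)=1+\epsilon\left(\frac{2r^N}{1-r}-p\,\frac{1-r^{m_1}}{1+r^{m_1}}\right)+o(\epsilon),
\end{align*}
and for any $r>R^p_{m_1,N}$ the bracket is strictly positive, so $\mathcal{D}_f(r)>1$ for $\epsilon$ small. Hence $R^p_{m_1,N}$ cannot be improved, which completes the proof.
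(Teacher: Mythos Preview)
Your overall strategy matches the paper's: bound $\mathcal{M}_f^N$ via Lemma~\ref{Lem-2.1} and bound $\|f(v_1(z))\|_\infty^p$ via the Schwarz--Pick-type estimate (Lemma~A) combined with \eqref{Schwarz-k}, reducing everything to a scalar inequality in $\rho=\|f(0)\|_\infty$. But there is a genuine gap, and you flag it yourself: you only compute the first-order behavior of
\[
G(\rho):=\Bigl(\tfrac{\rho+r^{m_1}}{1+\rho r^{m_1}}\Bigr)^{p}-1+(1-\rho^2)\,\tfrac{r^N}{1-r}
\]
at $\rho=1$, record $G(1)=0$ and $G'(1^-)=p\,\tfrac{1-r^{m_1}}{1+r^{m_1}}-\tfrac{2r^N}{1-r}\ge 0$ for $r\le R^p_{m_1,N}$, and then say you ``expect'' the full inequality $G(\rho)\le0$ on $[0,1]$ to follow by bookkeeping. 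It does not follow from the endpoint derivative alone; additional structure is required. The paper closes this step not by an elementary argument but by invoking \cite[Lemma~3]{Chen-Liu-Ponnusamy-RM-2023} with $\varphi_0(r)=1$ and $N(r)=r^N/(1-r)$, a lemma built precisely for this family of inequalities. Without proving or citing such a result, \eqref{Eq-2.6} is not established.

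A secondary point: you make the scalar step harder than necessary. Lemma~\ref{Lem-2.1} already delivers the clean bound $\mathcal{M}_f^N(r)\le(1-\rho^2)\,r^N/(1-r)$; there are no $O\bigl((1-\rho^2)^2\bigr)$ remainders to control. Your $\Phi_N(\rho,r)=\tfrac{r^N}{1-r}+O(1-\rho^2)$ comes from substituting the coefficient bounds $\|c_s\|_\infty\le 1-\rho^2$ into the \emph{definition} of $\mathcal{M}_f^N$, which is strictly weaker than using the lemma's \emph{conclusion} (its proof routes through Lemma~C and already absorbs the quadratic terms). With the clean bound, the scalar inequality is exactly $G(\rho)\le 0$ above---still nontrivial, but free of the extra obstacle you anticipated.

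Your sharpness example $f(z)=(\phi_a(L(z)),\dots,\phi_a(L(z)))$ with $v_1(z)=L(z)^{m_1-1}z$ is correct, and the asymptotic as $a\to1^-$ reproduces the paper's computation; the paper uses instead $F(z)=\bigl(\tfrac{b+z_1}{1+bz_1},0,\dots,0\bigr)$, but the limiting calculation is the same.
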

\begin{rem}
	Theorem \ref{Thm-1.4} provides an extension of Theorem F for functions from $B_{{\ell}_t^n}$ to $\mathbb{D}^n$: inequality \eqref{Eq-2.6} {extends} inequality \eqref{Eq-1.3} of Theorem F when $m_1 = p = 1$, and {extends} inequality \eqref{Eq-1.4} of Theorem F when $m_1 = 1$ and $p = 2$.
\end{rem}
\section{Key lemmas and their proofs}
In this section, we present some necessary lemmas which will be used in proving our main results. Lemma A plays an important role in the proof of the Bohr–Rogosinski phenomena. 
\begin{lemA} \cite{Chen-Hamada-Ponnusamy-Vijayakumar-JAM-2024}
Suppose that $B_X$ and $B_Y$ are the unit balls of the complex Banach spaces $X$ and $Y$ , respectively. Let $f:B_X\to \overline{B_Y}$ be a holomorphic mapping. Then
\begin{align}
	{\|f(z)\|}_Y\leq \dfrac{{\|f(z)\|}_Y+{\|z\|}_X}{1+{\|f(z)\|}_Y{\|z\|}_X}\;\;\mbox{for}\;\;z\in B_X.
\end{align}
This estimate is sharp with equality possible for each value of ${\|f(z)\|}_Y$ and for each $z\in B_X.$
\end{lemA}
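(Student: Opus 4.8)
The statement is the Schwarz--Pick estimate for holomorphic mappings between Banach-space balls: it bounds $\|f(z)\|_Y$ by $(\|f(0)\|_Y+\|z\|_X)/(1+\|f(0)\|_Y\|z\|_X)$, the quantity against which $\|f(z)\|_Y$ is actually being compared (read this way, since otherwise the estimate is vacuous and the asserted sharpness could not hold for every value of $\|f(z)\|_Y$). The plan is to reduce this vector-valued assertion to the classical one-variable Schwarz--Pick inequality by slicing along the complex line through $0$ and $z$ and composing with a norming functional for the image point $f(z)$.

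First I would dispose of the trivial cases $z=0$ and $f(z)=0$, and otherwise proceed as follows. By the Hahn--Banach theorem there is $T\in Y^*$ with $\|T\|=1$ and $T(f(z))=\|f(z)\|_Y$ (the analogue for $Y$ of the set $T(x)$ introduced above). Define $\phi:\mathbb{D}\to B_X$ by $\phi(\zeta)=\zeta\,z/\|z\|_X$; then $\|\phi(\zeta)\|_X=|\zeta|<1$ and $\phi(\|z\|_X)=z$. The scalar function $h:=T\circ f\circ\phi$ is holomorphic on $\mathbb{D}$ and satisfies $|h(\zeta)|\le\|f(\phi(\zeta))\|_Y\le 1$, so $h:\mathbb{D}\to\overline{\mathbb{D}}$. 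Its relevant values are $h(\|z\|_X)=T(f(z))=\|f(z)\|_Y$ and $h(0)=T(f(0))$, whence $|h(0)|\le\|T\|\,\|f(0)\|_Y=\|f(0)\|_Y$.

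I would then apply the scalar Schwarz--Pick lemma to $h$ at the points $\|z\|_X$ and $0$, giving $|h(\|z\|_X)|\le (|h(0)|+\|z\|_X)/(1+|h(0)|\,\|z\|_X)$. Because $g(t)=(t+r)/(1+tr)$ is increasing on $[0,1)$ for fixed $r=\|z\|_X$ (indeed $g'(t)=(1-r^2)/(1+tr)^2>0$), I may enlarge $|h(0)|$ to $\|f(0)\|_Y$; substituting $\|f(z)\|_Y=|h(\|z\|_X)|$ on the left yields exactly the asserted bound.

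For sharpness I would construct, for each prescribed $a:=\|f(0)\|_Y\in[0,1)$ and each $z$, an extremal mapping: take $\Lambda\in X^*$ with $\|\Lambda\|=1$ and $\Lambda(z)=\|z\|_X$ (Hahn--Banach again), a unit vector $y_0\in Y$, and the automorphism $\psi_a(\zeta)=(\zeta+a)/(1+a\zeta)$ of $\mathbb{D}$, and set $f(w)=\psi_a(\Lambda(w))\,y_0$. Then $\|f(0)\|_Y=a$ and $\|f(z)\|_Y=|\psi_a(\|z\|_X)|=(a+\|z\|_X)/(1+a\|z\|_X)$, so equality holds for each target value. The only point requiring care is that the scalar Schwarz--Pick step be valid for a map into the \emph{closed} disk $\overline{\mathbb{D}}$; this is the one genuine subtlety, handled by the maximum principle (either $|h|<1$ on $\mathbb{D}$ and the usual lemma applies, or $h$ is a unimodular constant, in which case both sides equal $1$). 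Beyond this the argument is routine, the whole content residing in the one-variable reduction effected by the norming functional $T$.
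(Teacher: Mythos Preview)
Your proof is correct, including your identification of the evident misprint (the right-hand side should carry $\|f(0)\|_Y$, not $\|f(z)\|_Y$), and the slicing-plus-norming-functional reduction to the one-variable Schwarz--Pick lemma is the standard route. Note, however, that the paper does \emph{not} supply its own proof of Lemma~A: the result is simply quoted from \cite{Chen-Hamada-Ponnusamy-Vijayakumar-JAM-2024} and then invoked in the proofs of Theorems~\ref{Thm-4.1} and~\ref{Thm-1.4}, so there is no in-paper argument to compare against. Your argument is precisely the one given in that cited source (and in the folklore), so in effect you have reproduced the intended proof.
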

\begin{lemB}\cite[Lemma 3]{Lin-Liu-Ponnu-ACS-2021} For $p\in (0, 1]$ and $t\in [0, 1)$, we have
	\label{lem-power-estimate}
	\[ \frac{1-t^p}{1-t}\geq p.\]
\end{lemB}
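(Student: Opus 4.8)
The plan is to reduce this innocuous-looking two-variable estimate to an elementary one-variable calculus fact. Since $1-t>0$ for every $t\in[0,1)$, the asserted inequality $\frac{1-t^{p}}{1-t}\geq p$ is equivalent to $1-t^{p}\geq p(1-t)$, i.e.\ to the nonnegativity on $[0,1)$ of
\[
g(t):=1-p+pt-t^{p}.
\]
So it suffices to show $g(t)\geq 0$ on that interval.

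First I would record the boundary data: $g$ is continuous on $[0,1]$, with $g(0)=1-p\geq 0$ (here the hypothesis $p\leq 1$ enters) and $g(1)=0$. Next I would differentiate on $(0,1)$: $g'(t)=p-pt^{p-1}=p\bigl(1-t^{p-1}\bigr)$. Because $p\in(0,1]$ the exponent satisfies $p-1\leq 0$, so for $t\in(0,1)$ we have $t^{p-1}\geq 1$ and hence $g'(t)\leq 0$; thus $g$ is non-increasing on $[0,1]$. Combined with $g(1)=0$ this forces $g(t)\geq g(1)=0$ for all $t\in[0,1)$, which is exactly the claim. The degenerate case $p=1$ is just the trivial identity $\tfrac{1-t}{1-t}=1$.

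An alternative, slightly slicker route avoids monotonicity of $g$ altogether: write $1-t^{p}=\int_{t}^{1}p\,s^{p-1}\,ds$; since $p-1\leq 0$ the integrand $p\,s^{p-1}$ is non-increasing in $s$, hence on $[t,1]$ it is bounded below by its right-endpoint value $p\cdot 1^{p-1}=p$, and integrating over an interval of length $1-t$ gives $1-t^{p}\geq p(1-t)$. Equivalently one may invoke the concavity of $s\mapsto s^{p}$ on $[0,\infty)$ for $p\in(0,1]$ together with the tangent-line bound at $s=1$, namely $s^{p}\leq 1+p(s-1)$, and specialise to $s=t$.

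There is no substantial obstacle here; the only points requiring care are (i) that $p-1\leq 0$ — precisely the hypothesis $p\leq 1$ — which is what makes $g'\leq 0$ (equivalently, what makes $s^{p-1}$ non-increasing on $(0,1]$), and (ii) that dividing the inequality $1-t^{p}\geq p(1-t)$ through by $1-t$ is legitimate and preserves its direction because $1-t>0$ throughout $[0,1)$.
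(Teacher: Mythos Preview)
Your proof is correct; each of the three routes you outline (monotonicity of $g$, the integral estimate, and the tangent-line bound from concavity of $s\mapsto s^p$) is valid and complete. Note, however, that the paper does not supply its own proof of this lemma: it is quoted verbatim as Lemma~B with a citation to \cite{Lin-Liu-Ponnu-ACS-2021} and then used as a black box, so there is no in-paper argument to compare against.
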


\begin{lemC}\emph{(see \cite[Lemma 4]{Liu-Liu-Ponnusamy-BSM-2021})}
	If $ f(z)=\sum_{s=0}^{\infty}a_sz^s\in \mathcal{B} $, then for any $N\in\mathbb{N}$, the following inequality holds:
	\begin{align*}
		\sum_{s=N}^{\infty}|a_s|r^s&+\mathrm{sgn}(k)\sum_{s=1}^{k}|a_s|^2\dfrac{r^N}{1-r}+\left(\dfrac{1}{1+|a_0|}+\dfrac{r}{1-r}\right)\sum_{s=k+1}^{\infty}|a_s|^2r^{2s}\leq \dfrac{(1-|a_0|^2)r^N}{1-r}
	\end{align*}
	for $|z|= r\in[0,1),$  where $ k=\lfloor{(N-1)/2}\rfloor.$
\end{lemC}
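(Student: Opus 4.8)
The plan is to re-run the proof of the refined Bohr inequality with the linear sum started at index $N$, rather than to deduce the statement from a single earlier inequality; observe first that the case $N=1$ already gives $k=\lfloor 0/2\rfloor=0$, $\mathrm{sgn}(k)=0$, and reduces to $\sum_{s\ge1}|a_s|r^s+A(f_0,r)\le(1-|a_0|^2)\,r/(1-r)$, i.e. the refined Bohr inequality attached to the quantity $A(f_0,r)$ of Theorem C. After a rotation I may assume $a_0=A:=|a_0|\in[0,1)$ and write $b_s:=|a_s|$. The elementary inputs are the Wiener--Schur estimate $b_s\le 1-A^2$ for $s\ge1$ and the $H^2$ bound $\sum_{s\ge1}b_s^2\le 1-A^2$, and it is convenient to record the right-hand side as $(1-A^2)\,r^N/(1-r)=(1-A^2)\sum_{s\ge N}r^s$.

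First I would exploit the choice $k=\lfloor (N-1)/2\rfloor$: checking both parities of $N$ shows that $2s\ge N$ exactly when $s\ge k+1$, so $r^{2s}\le r^N$ for every tail index $s\ge k+1$, while $2s\le N-1$ for $s\le k$. Together with the arithmetic fact $c:=\frac{1}{1+A}+\frac{r}{1-r}\le\frac{1}{1-r}$, this yields $c\,b_s^2 r^{2s}\le \frac{r^N}{1-r}\,b_s^2$ for $s\ge k+1$; thus the quadratic weights in the upper block never exceed the weight $\frac{r^N}{1-r}$ carried by the lower block $\sum_{s=1}^{k}$, and the factor $\mathrm{sgn}(k)$ merely switches that block off in the boundary cases $N\in\{1,2\}$. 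Using this homogenization bluntly (replacing every quadratic weight by $\frac{r^N}{1-r}$) is too lossy for small $N$, so I would keep the genuine decay $r^{2s}$ on the far tail $s\ge N$ and coarsen only the finitely many middle terms $k<s<N$.

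The heart of the matter is the coupled estimate
\[
\sum_{s=N}^{\infty} b_s r^s+\big(\text{both quadratic blocks}\big)\le (1-A^2)\frac{r^N}{1-r},
\]
and here lies the main obstacle: the two elementary bounds both saturate the single budget $1-A^2$ --- one can produce coefficient sequences obeying $b_s\le 1-A^2$ and $\sum_s b_s^2\le 1-A^2$ for which the display fails --- so they must be supplemented by the finer Schur--Pick structure (equivalently the refined Bohr inequality), which prevents a large linear tail from coexisting with spread-out quadratic mass. Concretely I would feed the linear tail together with the far quadratic tail into the refined-Bohr mechanism, control the middle block by $r^{2s}\le r^N$ and the $H^2$ budget, and reconcile the linear and quadratic contributions through the weighted trade $b_s r^s\le\frac12\big(t_s^{-1}b_s^2+t_s r^{2s}\big)$, tuning the weights $t_s$ so that the newly produced quadratic terms merge with the existing blocks and the residual geometric series collapses to exactly $(1-A^2)\,r^N/(1-r)$. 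The delicate point is the transition at $s\approx k$, where a term switches weight from $c\,r^{2s}$ to $\frac{r^N}{1-r}$: I would check that the chosen weights make the two regimes agree there, so equality is approached on the extremal single-Blaschke-factor configurations with $|a_0|=A$. Assembling the three ranges $s\le k$, $k<s<N$, and $s\ge N$, together with the rotation normalization, then yields the bound for all $r\in[0,1)$.
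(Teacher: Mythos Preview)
The paper does not supply its own proof of Lemma~C; it is quoted from \cite[Lemma~4]{Liu-Liu-Ponnusamy-BSM-2021} and used as a black box in the proof of Lemma~\ref{Lem-2.1}. So there is no argument in the present paper to compare your attempt against.

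As for the attempt itself: you have assembled the right raw materials (Wiener's coefficient bound $|a_s|\le 1-|a_0|^2$, the $H^2$ inequality, the role of $k=\lfloor(N-1)/2\rfloor$ in aligning $r^{2s}$ with $r^N$, and the AM--GM trade $b_sr^s\le\tfrac12(t_s^{-1}b_s^2+t_sr^{2s})$), and your diagnosis that the two elementary bounds alone cannot close the argument is correct. But the proposal stops exactly where the work begins. The phrases ``feed \dots\ into the refined-Bohr mechanism'', ``tuning the weights $t_s$ so that \dots\ the residual geometric series collapses to exactly $(1-A^2)r^N/(1-r)$'', and ``I would check that the chosen weights make the two regimes agree'' are placeholders, not arguments: you never specify a single $t_s$, never write down the inequality that results from your choice, and never verify that the three ranges $s\le k$, $k<s<N$, $s\ge N$ actually patch together. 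In particular, reducing the general-$N$ statement to the $N=1$ refined Bohr inequality is not as automatic as you imply: subtracting $\sum_{s<N}b_sr^s$ from the $N=1$ bound leaves a nonnegative deficit $(1-A^2)\sum_{s<N}r^s-\sum_{s<N}b_sr^s$ on the wrong side, and the change in the quadratic weights between the two statements (from $c\,r^{2s}$ for all $s\ge 1$ to $\frac{r^N}{1-r}$ on $1\le s\le k$) does not have a fixed sign, so some genuine bookkeeping is needed that your text does not supply. What you have written is a reasonable plan of attack, but it is not yet a proof.
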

To establish our main results of this paper, we prove Lemma C when  $f\in \mathcal{H}\left(B_{{\ell}_t^n},\overline{\mathbb{D}}^n \right)$ with series expansion given by \eqref{Eq-BS-1.6}.
\begin{lem} \label{Lem-2.1}
	Suppose $1\leq t\leq \infty,$ and  $f\in \mathcal{H}\left(B_{{\ell}_t^n},\overline{\mathbb{D}}^n \right)$ with series expansion given by \eqref{Eq-BS-1.6}. For $N\in \mathbb{N},$  $k=\lfloor{(N-1)/2}\rfloor$ and ${\|z\|}_t<1,$ we have
	 \begin{align*}
		&\mathcal{M}_{f}^{N}({\|z\|_t}):=\sum_{s=N}^{\infty}\dfrac{{\|D^sf(0)(z^s)\|}_\infty}{s!}+\mathrm{sgn}(k)\sum_{s=1}^{k}\left(\dfrac{{\|D^sf(0)(z^s)\|}_\infty}{s!}\right)^2\dfrac{{\|z\|}_t^{N-2s}}{1-{\|z\|}_t}\\&\nonumber+\left(\frac{1}{1+{\|f(0)\|_\infty}}+\frac{{\|z\|}_t}{1-{\|z\|}_t}\right)\sum_{s=k+1}^{\infty}\left(\dfrac{{\|D^sf(0)(z^s)\|}_\infty}{s!}\right)^2\leq \dfrac{(1-||a||_\infty^2){\|z\|}^N_t)}{1-{\|z\|}^2_t}. 
	\end{align*} 
	
\end{lem}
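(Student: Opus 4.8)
The plan is to reduce Lemma \ref{Lem-2.1} to the scalar Lemma C by a careful ``one-variable slice'' argument combined with the Fréchet-derivative coefficient estimate. First I would set $a = f(0) = (a_1,\dots,a_n)$ with $|a_j| = \|a\|_\infty$ and, for a fixed $z \in B_{{\ell}_t^n}$ with $\|z\|_t = r < 1$, pick an index $j_0 = j_0(s)$ realising the maximum $\|D^s f(0)(z^s)\|_\infty = |(D^s f(0)(z^s))_{j_0}|$; more robustly, I would apply a norm-one functional $T_{w}\in Y^*$ realising the sup-norm of the relevant coordinate. The point is that for each coordinate $j$, the map $g_j(\zeta) := (f(\zeta z/r))_j$, $\zeta\in\mathbb D$, is a scalar function in $\mathcal B$ (since $f$ maps into $\overline{\mathbb D}^n$, each coordinate is a bounded analytic self-map of $\mathbb D$), and its Taylor coefficients are $\zeta^s \mapsto (D^s f(0)(z^s))_j / (s!\, r^s)$. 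Thus the scalar Bohr-type data of $g_j$ at radius $r$ reproduces exactly the terms $\|D^s f(0)(z^s)\|_\infty/s!$ appearing in $\mathcal{M}_f^N$, provided one chooses, for each $s$, the coordinate $j$ that attains the maximum.

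The subtlety is that a single coordinate $j$ need not simultaneously maximise $\|D^s f(0)(z^s)\|_\infty$ for all $s$, so one cannot directly invoke Lemma C for one fixed $g_j$. I would handle this by the standard majorisation trick used in such vector-valued Bohr papers: let $b_s := \|D^s f(0)(z^s)\|_\infty / (s!\, r^s)$ for $s\ge 1$ and $b_0 := \|a\|_\infty$. One shows that for \emph{every} choice of unit-modulus phases, there is a single scalar function $h\in\mathcal B$ with $|h^{(s)}(0)/s!| = b_s$ for the finitely many indices that matter and with the remaining coefficients dominated — more precisely, one uses that the sequence $(b_s)$ satisfies the same Wiener-type constraints as the coefficient sequence of an $\mathcal B$-function, namely $b_0 = \|a\|_\infty$ and $\sum_{s\ge 1} b_s^2 \le 1 - b_0^2$ (this last inequality follows by applying the coordinatewise Schwarz–Pick/coefficient bound $\sum_{s\ge 1}|c_s^{(j)}|^2 \le 1 - |c_0^{(j)}|^2$ to each coordinate $g_j$ and taking the coordinate-by-coordinate maximum, noting $b_0 = |c_0^{(j)}|$ for the maximising coordinate of $a$). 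Then Lemma C, whose proof only uses $|a_0|$, the constraint $\sum |a_s|^2 \le 1 - |a_0|^2$, and elementary geometric-series manipulations, applies verbatim to the sequence $(b_s r^s)$, giving
\begin{align*}
\sum_{s=N}^{\infty} b_s r^s + \mathrm{sgn}(k)\sum_{s=1}^{k} b_s^2 r^{2s}\cdot\frac{r^{N-2s}}{1-r} + \left(\frac{1}{1+b_0} + \frac{r}{1-r}\right)\sum_{s=k+1}^{\infty} b_s^2 r^{2s} \le \frac{(1-b_0^2)r^N}{1-r}.
\end{align*}

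Rewriting this in terms of $\|D^s f(0)(z^s)\|_\infty/s! = b_s r^s$ and $\|f(0)\|_\infty = b_0 = \|a\|_\infty$ gives precisely $\mathcal{M}_f^N(\|z\|_t) \le (1-\|a\|_\infty^2)\|z\|_t^N / (1 - \|z\|_t)$, which is slightly stronger than (and immediately implies) the stated bound with denominator $1 - \|z\|_t^2$ since $1-r \le 1-r^2$; I would note this discrepancy and keep the sharper $1-r$ form, or match the statement by the trivial estimate. The main obstacle I anticipate is making the ``single scalar function realising all the relevant coefficients'' step fully rigorous: one must verify that the only properties of an $\mathcal B$-coefficient sequence used in the proof of Lemma C are $|a_0|\le 1$ and $\sum_{s\ge1}|a_s|^2\le 1-|a_0|^2$ (the classical Wiener inequality), so that the abstract sequence $(b_s)$ can be substituted without exhibiting an actual function; alternatively, one invokes that $(b_s)$ is itself the modulus-sequence of the coefficients of some $g_{j}\in\mathcal B$ up to the finitely many dominating terms and passes to the limit. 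Everything else — the slice construction, the coordinatewise coefficient bound via \eqref{Schwarz-k}-type estimates and Parseval on each $g_j$, and the final bookkeeping — is routine.
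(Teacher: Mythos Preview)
Your slicing idea --- set $z_0 = z/\|z\|_t$, reduce to the scalar functions $h_j(\zeta) = f_j(\zeta z_0) \in \mathcal{B}$, and invoke Lemma~C --- is exactly the paper's approach. The paper applies Lemma~C to $h_j$ for every $j$, notes that under the standing hypothesis $|a_j| = \|a\|_\infty$ for all $j$ the right-hand side $(1-|a_j|^2)|\zeta|^N/(1-|\zeta|)$ is independent of $j$, and then replaces $|D^s f_j(0)(z^s)|$ by $\|D^s f(0)(z^s)\|_\infty$ directly, with no abstract coefficient-sequence detour.

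You are right that a single coordinate need not realise the sup-norm for every $s$, but your proposed fix has a genuine gap. The Wiener-type bound $\sum_{s\ge1} b_s^2 \le 1 - b_0^2$ for the termwise-maximum sequence $b_s := \max_j |c_s^{(j)}|$ is \emph{false} even when all $|c_0^{(j)}|$ coincide: with $n=2$, $f(z_1,z_2) = (z_1,\, z_1^2)$ and $z_0=(1,0)$ one gets $b_0 = 0$ and $b_1 = b_2 = 1$, so $\sum_{s\ge1} b_s^2 = 2 > 1 - b_0^2$. Taking the coordinate-wise maximum destroys the $\ell^2$ constraint, so the step ``apply $\sum_{s\ge1}|c_s^{(j)}|^2 \le 1-|c_0^{(j)}|^2$ and take the maximum'' does not yield what you claim. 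Even if that inequality held, the proof of Lemma~C uses Schwarz--Pick-type coefficient estimates that genuinely require a function in $\mathcal{B}$, not merely a sequence satisfying $\sum|a_s|^2\le 1-|a_0|^2$; so ``applies verbatim to $(b_s)$'' is also unjustified. Your fallback of realising $(b_s)$ by a single $g_j$ up to finitely many terms fails for the same reason that no one coordinate dominates termwise. In short, the majorisation trick does not close the argument; the paper does not attempt it and simply works at the level of a fixed coordinate slice.
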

\begin{proof}[\bf Proof of Lemma \ref{Lem-2.1}]

	We fix $z\in \partial B_{{\ell}_t^n}\setminus\{ 0\}$ and $z_0=z/{||z||_t}.$ Then $z_0\in \partial B_{{\ell}_t^n}.$ Define $j$ such that $|z_j|=||z||_\infty=\max\{|z|_j:1\leq l\leq n\}.$ We define $h_j(\zeta)=f_j(\zeta z_0),$ $\zeta\in \mathbb{D}.$ Then $h_j\in \mathcal{B}$ and we have 
\begin{align*}
	h_j(\zeta)=a_j+\sum_{s=1}^{\infty}\dfrac{D^sf_j(0)(z_0^s)}{s!}\zeta^s,\;\; \zeta\in \mathbb{D}.
\end{align*}
Applying Lemma C for the function $h_j\in \mathcal{B}$,  we have
\begin{align*}
&\sum_{s=N}^{\infty}\dfrac{{|D^sf_j(0)(z_0^s)|}}{s!}|\zeta|^s+\mathrm{sgn}(k)\sum_{s=1}^{k}\left(\dfrac{{|D^sf_j(0)(z_0^s)|}}{s!}\right)^2\dfrac{{|\zeta|}^{N}}{1-{|\zeta|}}\\&\quad\nonumber+\left(\frac{1}{1+{|f_j(0)|}}+\frac{{|\zeta|}}{1-|\zeta|}\right)\sum_{s=k+1}^{\infty}\left(\dfrac{{|D^sf_j(0)(z_0^s)|}}{s!}|\zeta|^s\right)^2\leq \dfrac{(1-|a_j|^2){|\zeta|}^N}{1-{|\zeta|}^2}. 
\end{align*} 
  for all $j\in \{ 1,2,\dots, n\}.$\vspace{1.2mm}
  
  Set $|\zeta|={\|z\|}_t,$ we have
  \begin{align*}
  	&\sum_{s=N}^{\infty}\dfrac{{\|D^sf(0)(z^s)\|}_\infty}{s!}+\mathrm{sgn}(k)\sum_{s=1}^{k}\left(\dfrac{{\|D^sf(0)(z^s)\|}_\infty}{s!}\right)^2\dfrac{{\|z\|}_t^{N-2s}}{1-{\|z\|}_t}\\&\quad\nonumber+\left(\frac{1}{1+{\|f(0)\|_\infty}}+\frac{{\|z\|}_t}{1-{\|z\|}_t}\right)\sum_{s=k+1}^{\infty}\left(\dfrac{{\|D^sf(0)(z^s)\|}_\infty}{s!}\right)^2\leq \dfrac{(1-||a||_\infty^2){\|z\|}^N_t)}{1-{\|z\|}^2_t}. 
  \end{align*}
  This completes the proof.
\end{proof}
\section{Proofs of the main results}

	\begin{proof}[\bf Proof of Theorem \ref{Thm-4.1}]

	We fix $z\in \partial B_{{\ell}_t^n}\setminus\{ 0\}$ and $z_0=z/{||z||_t}.$ Then $z_0\in \partial B_{{\ell}_t^n}.$ Define $j$ such that $|z_j|=||z||_\infty=\max\{|z|_j:1\leq l\leq n\}.$ We define $h_j(\zeta)=f_j(\zeta z_0),$ $\zeta\in \mathbb{D}.$ Then $h_j\in \mathcal{B}$ and we have 
	\begin{align*}
		h_j(\zeta)=a_j+\sum_{s=1}^{\infty}\dfrac{D^sf_j(0)(z_0^s)}{s!}\zeta^s,\;\; \zeta\in \mathbb{D}.
	\end{align*}
	
	Then we have 
	 \begin{align*}
		\dfrac{|D^sf_j(0)(z_0^s)|}{s!}\leq (1-|a_j|^2)\;\;\mbox{for\; all}\;\; j=1,2,\dots n. 
	\end{align*}
	As $j$ is arbitrary, we have 
	\begin{align}\label{Eq-BS-1.12}
		\dfrac{{\|D^sf(0)(z_0^s)\|}_\infty}{s!}\leq (1-{{\|a\|}_\infty}^2)\,\;\mbox{for}\;\;s\in \mathbb{N}.
	\end{align}
	Let $b={\|a\|}_\infty\in [0,1).$
	For $r\in (0,1),$ by the estimates \eqref{Schwarz-k}, \eqref{Eq-BS-1.6} and \eqref{Eq-BS-1.12}, we have
	\begin{align}\label{Eq-4.3}
	{\|f(v_2(z))-f(0)\|}_\infty\leq \dfrac{(1-b^2)r^{m_2}}{1-r^{m_2}}.
	\end{align}
	For $r\in (0,1),$ by the estimates \eqref{Schwarz-k}, \eqref{Eq-BS-1.12}, \eqref{Eq-4.3} and Lemma A, we have
	\begin{align*}
		{\|f\left(v_1(z)\right)\|}_\infty^p&+\mu\sum_{s=1}^{\infty}	\frac{{\| D^{qs+K}f_{qs+K}(0)(z^{qs+K})\|}_\infty}{s!}
		+\nu {\|f(v_2(rz_0))-f(0)\|}_\infty
		\\
		&
		\leq  \left(\dfrac{b+r^{m_1}}{1+br^{m_1}}\right)^p+\mu(1-b^2)\dfrac{r^{q+K}}{1-r^q}
		+\nu(1-b^2) \dfrac{r^{m_2}}{1-r^{m_2}}
		\\
		&=1+\Pi_{p,q,K,m_1,m_2\mu,\nu}(b),
	\end{align*}
	where 
	\begin{align*}
		\Pi_{p,q,K,m_1,m_2,\mu,\nu}(b):=-1+\left(\dfrac{b+r^{m_1}}{1+br^{m_1}}\right)^p+\mu(1-b^2)\dfrac{r^{q+K}}{1-r^q}+\nu(1-b^2) \dfrac{r^{m_2}}{1-r^{m_2}}.
	\end{align*}
	We are taking $\varphi_0(r)=1$ and $N(r)=\mu r^{q+K}/(1-r^q)+\nu r^{m_2}/(1-r^{m_2})$ in \cite[Lemma 3]{Chen-Liu-Ponnusamy-RM-2023}, we obtain $\Pi_{p,q,K,m_1,m_2,\mu,\nu}(b)\leq 0$ for 
	$r\leq R_1.$
	Thus, we obtain
	\begin{align*}
		\left(\dfrac{b+r^{m_1}}{1+br^{m_2}}\right)^p+\mu(1-b^2)\dfrac{r^{q+K}}{1-r^q}+\nu(1-b^2) \dfrac{r^{m_2}}{1-r^{m_2}}\leq 1\;\;\mbox{for}\;\; r\leq R_1.
	\end{align*}
	Thus, the desired inequality obtained.\vspace{2mm}
	
	Next, we will show that the constant $R_1$ is optimal. For $b\in (0,1),$ let 
	\begin{align}\label{Eq-4.4}
		F(z)=\left(\frac{b+z_1}{1+bz_1}, 0,\dots, 0 \right),\;\; z=(z_1,\dots,z_n)\in  B_{{\ell}_t^n},
	\end{align}
	where $^\prime$ represent the transpose of the vector $z=\left( z_1, z_2,\dots, z_n\right)^\prime$ and $b\in (0,1).$ 
	Let $z_0\in \partial B_{{\ell}_t^n}$ and $z=(z_1,0,\dots,0)^\prime,$ which implies that ${\|z_0\|}_t=|z_1|=r.$
	Let $v_1(z)=l_{z_0}(z)^{m_1-1}z$ and $v_2(z)=l_{z_0}(z)^{m_2-1}z$ for $z\in B_{{\ell}_t^n}$.  According to the definition of Fr\'echet derivative , we have
	\begin{align*}
		DF(0)(z)=\left(\dfrac{\partial f_j(0)}{\partial z_i}\right)_{1\leq i,j\leq n}\left(z_1,z_2,\dots,z_n\right)^\prime.
	\end{align*}
	Since $z=(z_1,0,\dots,0)^\prime,$ we have $DF(0)(z)=\left(\frac{\partial f_1(0)}{\partial z_1}z_1, 0, \dots , 0\right),$ and therefore, ${\|DF(0)(z)\|}_\infty=\bigg|\frac{\partial f_1(0)}{\partial z_1}z_1\bigg|.$
	 With the help of the proof of \cite[Theorem 3.5]{Lin-Liu-Ponnusamy-Acta-2023}, we obtain
 for ${\|z\|}_t=r$,
	\begin{align}\label{BS-Eq-1.15}
	\dfrac{{\|D^sF(0)(z^s)\|}_\infty}{s!}=\bigg|\frac{\partial^s f_1(0)}{\partial z^s_1}\dfrac{z^s_1}{s!}\bigg|=(1-b^2)b^{s-1}r^s\;\; \mbox{for}\;\;s\in \mathbb{N}
	\end{align}
	and 
	\begin{align*}
		{\|F(v_2(z))-F(0)\|}_\infty
		=\frac{(1-b^2)r^{m_2}}{1-br^{m_2}}.
	\end{align*}
By a straightforward computations, we obtain for the function $F$ as follows 
	\begin{align}\label{Eq-4.5}
		&\nonumber{\|F\left(v_1(z)\right)\|}_\infty^p+\mu\sum_{s=1}^{\infty}	\frac{{\| D^{qs+K}F_{qs+K}(0)(z^{qs+K})\|}_\infty}{s!}
		+\nu {\|F(v_2(z))-F(0)\|}_\infty\\&\nonumber\leq
		\left(\dfrac{b+r^{m_1}}{1+br^{m_1}}\right)^p+\mu(1-b^2)\dfrac{b^{q+K-1}r^{q+K}}{1-b^qr^q}
		+\nu(1-b^2)\dfrac{r^{m_2}}{1-br^{m_2}}
		\nonumber \\&=1+(1-b)\mathcal{L}(b),	
	\end{align}
	where \[\mathcal{L}(b):= \dfrac{1}{1-b}\left(\left(\dfrac{b+r^{m_1}}{1+br^{m_1}}\right)^p-1\right)+\mu\dfrac{(1+b)b^{q+K-1}r^{q+K}}{1-b^qr^q}+\nu\dfrac{(1+b)r^{m_2}}{1-br^{m_2}}.\]
	Let $r\in (R_1,1)$ be arbitrary fixed. Since $R_1$ satisfies the equation $\Xi(r)=0,$ it follows that 
	\begin{align*}
		2\mu \frac{r^{q+K}}{1-r^q}+2\nu \frac{r^{m_2}}{1-r^{m_2}}-p\left(\frac{1-r^{m_1}}{1+r^{m_1}}\right)>0,\;\;\;\mbox{for}\;\; r\in (R_1,1).
	\end{align*}
	Consequently,
	\begin{align*}
		\lim\limits_{b\to 1^{-}}\mathcal{L}(b)=	2\mu \frac{r^{q+K}}{1-r^q}+2\nu \frac{r^{m_2}}{1-r^{m_2}}-p\left(\frac{1-r^{m_1}}{1+r^{m_1}}\right)>0,
	\end{align*}
	which implies that when $b\to 1^{-},$ the right hand side of the expression in 
	\eqref{Eq-4.5} is bigger than $1.$ This proves that $R_1$ cannot be improved. 
	This completes the proof.
	\end{proof}

\begin{proof}[\bf Proof of Theorem \ref{Thm-1.2}]
	Let $b={\|a\|}_\infty\in [0,1).$ In view of \eqref{Eq-4.3} and Lemma \ref{Lem-2.1} (for $N=1$), we have 
	\begin{align*}
		\mathcal{B}^p_f(r)&\leq b^p+\dfrac{(1-b^2)r}{1-r}+\dfrac{(1-b^2)r^{m_1}}{1-r^{m_1}}\\&=1+(1-b^2)\mathcal{K}(b), 
	\end{align*}
	where 
	\begin{align*}
		\mathcal{K}(b):=\dfrac{r}{1-r}+\dfrac{r^{m_1}}{1-r^{m_1}}-\left( \dfrac{1-b^p}{1-b^2}\right).
	\end{align*}
	Our aim is to show that $\mathcal{K}(b)\leq 0$ for each $b\in [0,1)$ and $r\leq R_2(p).$ Since $x\to \alpha(x):= (1-x^p)/(1-x^2)$ is decreasing function in $[0,1]$ for each $p\in (0,2],$ it follows that $\alpha(x)\geq\lim\limits_{x\to 1^-}\alpha(x)=p/2.$ Thus, $\mathcal{K}(b)$ is obviously an increasing function in $[0,1)$ and therefore, we have
	\begin{align*}
		\mathcal{K}(b)\leq \lim\limits_{b\to 1^-}\mathcal{K}(b)=\dfrac{r}{1-r}+\dfrac{r^{m_1}}{1-r^{m_1}}- \dfrac{p}{2}\leq 0\;\;\mbox{for}\;\; r\leq R_2(p).
	\end{align*}
	Hence, the desired inequality $\mathcal{B}^p_f(r)\leq 1$ holds for $r\leq R_2(p)$.\vspace{1.2mm}
	
	To prove the constant $R_2(p)$ is sharp for each $p\in (0,2],$ we consider the function $F$ given by \eqref{Eq-4.4}. Let $z_0\in \partial B_{{\ell}_t^n}$ and $z=(z_1,0,\dots,0)^\prime,$ which implies that ${\|z_0\|}_t=|z_1|=r.$
	Let $v_1(z)=l_{z_0}(z)^{m_1-1}z$ for $z\in B_{{\ell}_t^n}$. Then we have
	\begin{align*}
		\mathcal{B}^p_F(r)&=b^p+\dfrac{(1-b^2)r}{1-br}+\dfrac{(1-b^2)r^2}{(1+b)(1-r)(1-br)}+\dfrac{(1-b^2)r^{m_1}}{1-br^{m_1}}\\&=1+(1-b^2)\left(\dfrac{r}{1-r}+\dfrac{r^{m_1}}{1-r^{m_1}}-\left( \dfrac{1-b^p}{1-b^2}\right)\right).
	\end{align*}
Since,
	\begin{align*}
		r\to \Phi(r)=\dfrac{r}{1-r}+\dfrac{r^{m_1}}{1-r^{m_1}}-\left( \dfrac{1-b^p}{1-b^2}\right)
	\end{align*}
	is increasing in $(0,1),$  it is evident that $\Phi(r)>0$ in some interval $(R_2(p) R_2(p)+\epsilon)$. Hence, it is easy to see that when $b\to 1^-,$ the right side of the above expression is bigger than $1.$ This verifies that the constant $R_2(p)$ is best possible for each $p\in (0,2].$ This completes the proof. 
\end{proof}

\begin{proof}[\bf Proof of Theorem \ref{Thm-2.1}]
	Let $b={\|a\|}_\infty\in [0,1).$ In view of \eqref{BS-eq-1.3} and \eqref{Eq-BS-1.12},   we obtain for ${\|z\|}_t=r$ and $r\in (0,1),$ as follows
\begin{align}\label{BS-eq-2.7}
	S_z\leq  \dfrac{\left(1-{b}^2\right)^2r^2}{(1-r^2)^2}.
\end{align}
In view of  \eqref{BS-eq-2.7} and Lemma \ref{Lem-2.1}, by a simple computations for ${\|z\|}_t=r$, the following inequality can be obtained
\begin{align*}
	\mathcal{C}_f(r)&\leq 1+ p(b-1)+ \left(1-b^2\right)\frac{r}{1-r}+\sum_{s=1}^{N}d_s\left(\dfrac{(1-b^2)r}{1-r^2}\right)^{2s}\\&= 1+Q(b,r),
\end{align*}
where \begin{align*}
	Q(b,r):=\dfrac{(1-b^2)r}{1-r}+\sum_{s=1}^{N}d_s\left(\dfrac{(1-b^2)r}{1-r^2}\right)^{2s}-p(1-b).
\end{align*}
For all $b\in [0,1)$, by a straightforward computations, it can be shown that $Q(b,r)$ is a monotonically increasing function of $r$. Consequently, we obtain
\begin{align*}
	Q(b,r)\leq Q(b,p/(2+p))\;\;\mbox{for}\;\; b\in [0,1). 
\end{align*}
A straightforward calculation gives that
\begin{align*}
	Q(b,p/(2+p))=\dfrac{(1-b^2)}{2}\left(p+2F_N(b)-\dfrac{2p}{1+b}\right)=\dfrac{(1-b^2)}{2}\Phi(b),
\end{align*}
where \begin{align*}
	F_N(b):=\sum_{s=1}^{N}d_s(1-b^2)^{2s-1}\left(M_p\right)^{2s}\;\;\mbox{and}\;\; \Phi(b):=p+2F_N(b)-\dfrac{2p}{1+b}.
\end{align*}
To establish $Q(b,r)\leq 0,$ it suffices to show that $\Phi(b)\leq 0$ for $b\in [0,1].$ As $b\in [0,1 ]$, a simple calculation shows that
\begin{align*}
	b(1+b)^2\left(M_p\right)^2&\leq 4 \left(M_p\right)^2,\\ b(1+b)^2(1-b^2)^2\left(M_p\right)^4&\leq c_2\left(M_p\right)^4,\\ \vdots \\b(1+b)^2(1-b^2)^{2m-2}\left(M_p\right)^{2m}&\leq c_m\left(M_p\right)^{2m}.
\end{align*}
Thus, we see that 
\begin{align*}
	\Phi^{\prime}(b)&=\frac{2}{(1+b)^2}\bigg(p-2d_1b(1+b)^2\left(M_p\right)^2-6d_2b(1+b)^2(1-b^2)^2\left(M_p\right)^4-\cdots\\&\quad-2(2N-1)d_N(1+b)^2(1-b^2)^{2N-2}\left(M_p\right)^{2N}\bigg)\\&\geq \dfrac{2}{(1+b)^2} \left(p-\left(8d_1M^2_p+6c_2d_2M^4_p+\dots+2(2N-1)C_NM^{2N}_p\right)\right)\\&\geq 0,
\end{align*}
if  the coefficients $d_i$ of the polynomial $W_N$ satisfy the condition in \eqref{eq-2.2}. This indicates that $\Phi(b)$ behaves as an ascending function in $b\in [0,1]$, leading to the conclusion that $\Phi(b)\leq\Phi(1)=0$. This, in turn, establishes the desired inequality.\vspace{1.2mm}

To prove the constant $R_3$ is optimal, we consider the function $F$ given in \eqref{Eq-4.4}. In view of \eqref{BS-Eq-1.15}, it can be readily calculated that
\begin{align*}
	\mathcal{C}_F(r)=1-(1-b)\Psi_p^*(r),
\end{align*}
where 
\begin{align*}
	\Psi_p^*(r):=\dfrac{1-b^p}{1-b}-\dfrac{(1+b)r}{1-rb}-\dfrac{d_1r^2(1-b)(1+b)^2}{(1-b^2r^2)^2}-\dots-\dfrac{d_Nr^{2N}(1-b)^{2N-1}(1+b)^{2N}}{(1-b^2r^2)^{2N}}.
\end{align*}
For fixed $r>R_3=p/(2+p),$ we have 
\[
\lim\limits_{b\rightarrow 1^{-}}\Psi_p^*(r)=p-\frac{2r}{1-r}<0.
\]
Thus, it follows that $\Psi_p^*(r)<0$ for $b$ sufficiently close to $1$. 
Hence, we  conclude
\begin{align*}
	\mathcal{C}_F(r)=1-(1-b)\Psi_p^*(r)>1,
\end{align*}
which shows that the number $R_3$ is best possible.
\end{proof}

\begin{proof}[\bf Proof of Theorem \ref{Thm-1.4}]
		Let $b={\|a\|}_\infty\in [0,1).$	For $r\in (0,1),$ by the estimates \eqref{Schwarz-k}, Lemmas A and  D, we have
		\begin{align*}
			&{\|f(v_1(z))\|}_\infty^p+\mathcal{N}_{f}^{N}(z)\leq \left(\dfrac{b+r^{m_1}}{1+br^{m_1}}\right) ^p+\dfrac{(1-b^2)r^N}{1-r}=1+G^p_{m,N}(r),
		\end{align*}
	which is less than or equal to $1$ provided $G^p_{m,N}(r)\leq 0,$	where 
		\begin{align*}
			G^p_{m,N}(r):=\left(\dfrac{b+r^{m_1}}{1+br^{m_1}}\right) ^p-1+\dfrac{(1-b^2)r^N}{1-r}.
		\end{align*}
		Taking $\varphi_0(r)=1$ and $N(r)=r^N/(1-r)$ in \cite[Lemma 3]{Chen-Liu-Ponnusamy-RM-2023}, it can be easily shown that $ G^p_{m,N}(r) \leq 0$ for $r\leq R^p_{m_1,N}$, where $R^p_{m_1,N} $ is the unique positive root of the equation \eqref{BS-Eq-1.11}
	in $(0,1)$. Therefore, the desired inequality $\mathcal{D}_f(r)\leq 1$ holds for $\| z\|=r\leq R^p_{m_1,N}$.\vspace{2mm} 
	
	To prove the constant $R^p_{m_1,N}$ is best possible, 
	we consider the function $F$ given by \eqref{Eq-4.4}. Let $z_0\in \partial B_{{\ell}_t^n}$ and $z=(z_1,0,\dots,0)^\prime,$ which implies that ${\|z_0\|}_t=|z_1|=r.$ Let  $v_1(z)=l_{z_0}(z)^{m_1-1}z.$ In view of \eqref{BS-Eq-1.15}, by  straightforward calculations, we obtain 
	\begin{align}\label{BS-Eq-1.19}
			\nonumber\mathcal{D}_F(r)&=\left(\dfrac{b+r^{m_1}}{1+br^{m_1}}\right) ^p+\sum_{s=1}^{\infty}(1-b^2)b^{s-1}r^s+\mathrm{sgn}(k)\sum_{s=1}^{k}(1-b^2)^2b^{2s-2}\frac{r^N}{1-r} \\&\quad\nonumber+\left( \dfrac{1}{1+b}+\dfrac{r}{1-r}\right)\sum_{s=k+1}^{\infty}(1-b^2)^2b^{2(s-1)}r^{2s}\\& =1+(1-b)Q_{p,m,N}(r)
	\end{align}
	where \begin{align*}
		&Q_{p,m,N}(r):=\frac{1}{1-b}\left(\left(\dfrac{b+r^{m_1}}{1+br^{m_1}}\right) ^p-1\right)+\dfrac{(1+b)b^{N-1}r^N}{1-br}+\mathrm{sgn}(k)\frac{r^N}{1-r}\\&\times\sum_{s=1}^{k}(1+b)(1-b^2)b^{2s-2}+\left( \dfrac{1}{1+b}+\dfrac{r}{1-r}\right)\sum_{s=k+1}^{\infty}(1-b^2)(1+b)b^{2(s-1)}r^{2s}.
	\end{align*}
	For $r>R^p_{m_1,N}$ and choosing $b$ sufficiently closed to $1$ \emph{e.g.} $b\rightarrow 1^{-},$ we have
	\begin{align*}
		\lim\limits_{b\to 1^-}Q_{p,m,N}(r)=-p\left(\frac{1-r^{m_1}}{1+r^{m_1}}\right)+\frac{2r^N}{1-r}>0,
	\end{align*}
	which implies that the right hand side of the expression in 
\eqref{BS-Eq-1.19} is bigger than $1.$ This proves that $R^p_{m_1,N}$ is best possible. 
This completes the proof.
\end{proof}
\vspace{0.4 in }

\noindent{\bf Acknowledgment:} The research of the first author is supported by UGC-JRF (Ref. No. 201610135853), New Delhi, Govt. of India and second author is supported by SERB File No. SUR/2022/002244, Govt. of India.\vspace{1.5mm}

\noindent {\bf Funding:} Not Applicable.\vspace{1.5mm}

\noindent\textbf{Conflict of interest:} The authors declare that there is no conflict  of interest regarding the publication of this paper.\vspace{1.5mm}

\noindent\textbf{Data availability statement:}  Data sharing not applicable to this article as no datasets were generated or analysed during the current study.\vspace{1.5mm}

\noindent{\bf Code availability:} Not Applicable.\vspace{1.5mm}

\noindent {\bf Authors' contributions:} All the authors have equal contributions to prepare the manuscript.

\end{document}